\theoremstyle{plain}
\newtheorem{theorem}{Theorem}
\newtheorem{lemma}[theorem]{Lemma}
\newtheorem{corollary}[theorem]{Corollary}
\newtheorem*{thm}{Theorem}
\theoremstyle{definition}
\newtheorem*{question}{Question}
\newcommand{\co}{\colon\thinspace}
\newcommand{\G}{\mathcal G}
\newcommand{\g}{\mathcal G_{H\cap K}}
\newcommand{\T}{\mathcal T}
\newcommand{\GH}{\Gamma_{\! H}}
\newcommand{\GK}{\Gamma_{\! K}}
\newcommand{\GHK}{\Gamma_{\! H\cap K}}
\newcommand{\GHvK}{\Gamma_{\! H\vee K}}
\newcommand{\X}{\mathcal X}
\newcommand{\rank}{\mathrm{rank}}
\newcommand{\val}{\mathrm{valence}}
\newcommand{\st}{\mathrm{star}}
\newcommand{\diagmatrix}{\vspace{4pt}\! \! \! \!  {{}_{\mbox{$1$}}}_{\, \ddots_{\mbox{\! \! $1$}}}}
\date{August 14, 2008}
\begin{document}

\title{\textbf{Intersections and joins of free groups}}
\author{Richard P. Kent IV\thanks{Work supported by a Donald D. Harrington Dissertation Fellowship and a National Science Foundation Postdoctoral Fellowship.}}
\maketitle

\begin{flushright}
{\small
\textit{The possible ranks higher than the actual.}

---common paraphrase of M. Heidegger.
}
\end{flushright}


\section{Introduction}

\noindent Let $F$ be a free group.  If $H$ and $K$ are subgroups of $F$, we let $H\vee K = \langle H, K \rangle$ denote the \textbf{join} of $H$ and $K$.

We study the relationship between the rank of $H \cap K$ and that of $H\vee K$ for a pair of finitely generated subgroups $H$ and $K$ of $F$.
In particular, we have the following particular case of the Hanna Neumann Conjecture, which has also been obtained by L. Louder \cite{louder} using his machinery for folding graphs of spaces \cite{louderKrull1,louderKrull2,louderfolding}.
For detailed discussions of the Hanna Neumann Conjecture, see \cite{hanna,hannaaddendum,walter,stallings,gersten,dicks}.
\begin{theorem}[Kent, Louder]\label{particulartheorem} Let $H$ and $K$ be nontrivial finitely generated subgroups of a free group of ranks $h$ and $k$, respectively.
If 
\[
\rank(H\vee K) - 1 \geq \frac{h+k-1}{2}
\]
then
\[
\rank(H\cap K) - 1 \leq (h-1)(k-1).
\]
\end{theorem}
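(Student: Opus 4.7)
My plan is to deduce the theorem from the rank inequality
\[
\rank(H \cap K) + \rank(H \vee K) \leq \rank(H) + \rank(K) \tag{$\star$}
\]
established under the hypothesis (it does not hold in general). Once $(\star)$ is in hand, the conclusion is elementary. Writing $j = \rank(H \vee K)$ and $\ell = \rank(H \cap K)$, the hypothesis rearranges to $h + k - j \leq (h + k - 1)/2$, so $(\star)$ gives $\ell \leq (h + k - 1)/2$ and hence $\ell - 1 \leq (h + k - 3)/2$. A short arithmetic check shows $(h + k - 3)/2 \leq (h-1)(k-1)$ whenever $2hk - 3h - 3k + 5 \geq 0$, which holds for all $h, k \geq 2$ with minimum $1$ at $h = k = 2$; the cases $h = 1$ or $k = 1$ are trivial since $H \cap K$ is then cyclic and $(h-1)(k-1) = 0$.

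For $(\star)$, I would work with Stallings foldings. Let $\GH$, $\GK$, $\GHvK$ be the based folded Stallings graphs over the rose $R$ representing $F$, and let $P = \GH \times_R \GK$ be the pullback. The basepoint component $C$ of $P$ has core equal to the Stallings graph of $H \cap K$, so $\rank(C) = \ell$. The folding $\GH \vee \GK \to \GHvK$ consists of exactly $f := h + k - j$ rank-reducing folds (those identifying two parallel edges whose endpoints are already matched), and the hypothesis is equivalent to $f \leq j - 1$. The strategy is to show $\rank(C) \leq f$: each rank-reducing fold arises from a pair of parallel paths in $\GH$ and $\GK$ spelling a common word in $F$, hence records an element of $H \cap K$; under the hypothesis, these $f$ elements should generate $H \cap K$.

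The main obstacle is the generation claim. Without the hypothesis it fails: taking $H, K$ to be distinct index-two subgroups of $F_2$ gives $h = k = 3$, $j = 2$, $f = 4$, but $\ell = 5$, so $\ell > f$ and $(\star)$ is violated; consistently, the hypothesis fails there ($j - 1 = 1 < 5/2$). So the hypothesis must enter substantively in the argument. Intuitively, large $j$ means few identifications occur during folding, which limits the ``hidden'' relations that would contribute extra generators to $H \cap K$ beyond those read off from the rank-reducing folds. Making this precise will likely require either an induction on $f$ (the base case $f = 0$ gives $H \vee K = H * K$ and $H \cap K = 1$ trivially) or an explicit surjection from the loops created by rank-reducing folds onto a generating set of $H \cap K$, organized by the folding tree of $\GH \vee \GK \to \GHvK$ and exploiting $f \leq j - 1$ to rule out redundant collapses.
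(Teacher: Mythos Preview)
Your entire strategy hinges on establishing the inequality $(\star)$ under the hypothesis of the theorem, and unfortunately $(\star)$ is \emph{false} under that hypothesis. Here is a concrete counterexample. Let $F=\langle a,b\rangle$, let $H_0,K_0$ be the kernels of the two obvious surjections $F\to\mathbb{Z}/2$ (so $h_0=k_0=3$, $H_0\vee K_0=F$, and $H_0\cap K_0$ has index $4$, hence rank $5$). Now pass to $F'=F*\langle d_1,d_2,d_3\rangle$ and set $H=H_0*\langle d_1,d_2,d_3\rangle$, $K=K_0$. Then $h=6$, $k=3$, $H\vee K=F'$ so $j=5$, and $H\cap K=H_0\cap K_0$ so $\ell=5$. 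The hypothesis holds with equality: $j-1=4=(h+k-1)/2$. But $(\star)$ reads $5+5\leq 6+3$, which is false. Note that the theorem's conclusion $\ell-1=4\leq (h-1)(k-1)=10$ is comfortably satisfied, so this is a counterexample to your intermediate claim, not to the theorem itself. The moral is that the conclusion $\rank(H\cap K)-1\leq(h-1)(k-1)$ is genuinely quadratic in $h,k$; you are trying to upgrade it to the linear bound $(h+k-3)/2$, which is far too strong.

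There is also a secondary issue: the number of ``rank-reducing'' folds in the folding sequence $\GH\vee\GK\to\GHvK$ is not literally $h+k-j$ (folds come in two flavors, only one of which drops the rank, and their count depends on the particular folding sequence), and the heuristic that each such fold ``records an element of $H\cap K$'' does not survive scrutiny---loops created during folding need not lie in either $\GH$ or $\GK$ individually. But this is moot given that $(\star)$ itself fails.

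The paper's proof proceeds along entirely different lines. It works with the \emph{topological} pushout $\T$ of $\GH$ and $\GK$ along $\GHK$ (not the wedge, and not $\GHvK$), proves an Euler-characteristic estimate $-\chi(\T)\leq\frac{1}{2}\#\{\mathfrak R\text{-classes of stars of branch vertices}\}$, and encodes the branch vertices of $\GHK$ in a $(2h-2)\times(2k-2)$ zero--one matrix $M$ whose entry-sum is $2(\rank(H\cap K)-1)$. A block decomposition of $M$ ties the number of blocks to the star-class count, and the hypothesis on $\rank(H\vee K)$ feeds in through $-\chi(\T)\geq\rank(H\vee K)-1$ to force enough zero rows/columns in $M$ that its entry-sum is at most $2(h-1)(k-1)$.
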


We also give a new proof of R. Burns' theorem \cite{burns}:
\begin{thm}[Burns] Let $H$ and $K$ be nontrivial finitely generated subgroups of a free group with ranks $h$ and $k$, respectively.
Then
\[
\rank(H\cap K) - 1 \leq 2(h - 1)(k - 1) - \min\big\{(h - 1), (k - 1)\big\}.
\]
\end{thm}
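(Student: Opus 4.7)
The natural strategy is to split on the rank of the join $j = \rank(H \vee K)$, invoking Theorem~\ref{particulartheorem} when $j$ is large and handling the low-join regime by a direct Stallings pullback count. Since $H \cap K \subseteq H \vee K$ and $H \vee K$ is itself free of rank $j$, I would first replace $F$ by $H \vee K$, so that the ambient free group has rank $j$ without altering the intersection.

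If $j - 1 \geq (h+k-1)/2$, Theorem~\ref{particulartheorem} applies and gives
\[
\rank(H \cap K) - 1 \;\leq\; (h-1)(k-1) \;\leq\; 2(h-1)(k-1) - \min\{h-1,\,k-1\},
\]
the last inequality reducing to $\min\{h-1,k-1\} \leq (h-1)(k-1)$, which holds whenever $h, k \geq 2$; the cases $h = 1$ or $k = 1$ are trivial since then $H \cap K$ is contained in a cyclic group and the right-hand side of Burns' bound is $0$.

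If instead $j - 1 < (h+k-1)/2$, equivalently $2j \leq h + k$, I would work with folded core Stallings graphs $\GH \to R_j$ and $\GK \to R_j$, whose first Betti numbers are $h-1$ and $k-1$. The pullback $\g = \GH \times_{R_j} \GK$ has a connected component at the basepoint whose fundamental group is $H \cap K$, and whose first Betti number can be bounded by a direct edge/vertex count. The main obstacle is the combinatorial estimate in this second case: one must show that the low-ambient-rank constraint $2j \leq h+k$ forces enough vertices of small valence in $\GH$ or $\GK$ to tighten the classical Stallings--Neumann pullback bound of $2(h-1)(k-1)$ by exactly $\min\{h-1,\,k-1\}$. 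I expect this saving to emerge from an Euler characteristic accounting on $\g$ that exploits both the valence bounds on the core graphs and the fact that $H \vee K$ is now the full ambient group; alternatively, one might iterate Theorem~\ref{particulartheorem} on intermediate subgroup pairs whose joins are larger (in the relative sense demanded by the hypothesis of that theorem), recovering Case~2 from Case~1 by induction on $h + k$.
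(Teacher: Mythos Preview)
Your Case~1 is fine and not circular (the paper's proof of Theorem~\ref{particulartheorem} does not use Burns), but Case~2 is a genuine gap: you have supplied only suggestions, and neither works as stated. The claim that $2j\le h+k$ ``forces enough vertices of small valence in $\GH$ or $\GK$'' is backwards---small $j$ only means the target rose $R_j$ is small, while $\GH$ and $\GK$ can still be $2j$-regular with arbitrarily many branch vertices (take $H,K$ of large finite index in $H\vee K$). Nor is there any evident way to manufacture intermediate subgroup pairs with relatively larger join so as to iterate Theorem~\ref{particulartheorem}. Worse, Case~2 is exactly where Burns is sharp (e.g.\ $H=K$ of rank~$2$: then $j=2$, $2j=h+k$, and both sides of Burns equal~$1$), so no soft count will do.

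The paper does not split on $j$ at all. It encodes the branch vertices of $\GHK$ in a $(2h-2)\times(2k-2)$ zero--one matrix $M$, puts $M$ into block form (Lemma~\ref{normalformlemma}), and observes (Lemma~\ref{noPandQ}) that $M$ cannot simultaneously lack zero rows, lack zero columns, and have a single block---for then the pushout $\T$ would have exactly one branch vertex, $3$-valent, and no extremal vertices, contradicting the handshaking parity. Either a zero row/column or $\ell\ge 2$ blocks then furnishes the saving $\min\{2h-2,2k-2\}$ in the entry-sum via Lemma~\ref{entrysum}. If you prefer to deduce Burns from the other main results, the clean route is Theorem~\ref{strongburns} rather than Theorem~\ref{particulartheorem}: assuming $H\cap K\neq 1$ and $h\le k$, it gives $\rank(H\cap K)-1\le 2(h-1)(k-1)-(h-1)(\rank(H\vee K)-1)$, and since $h\ge 2$ forces $\rank(H\vee K)\ge 2$, Burns follows at once; the cases $h=1$ or $H\cap K=1$ are trivial.
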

\noindent (In fact, we obtain W. Neumann's form of this inequality \cite{walter}, see Section \ref{burnssection}.)

Our main theorem is the following strong form of Burns' inequality:
\begin{theorem}\label{strongburns} Let $H$ and $K$ be nontrivial finitely generated subgroups of $F$ of ranks $h$ and $k \geq h$, respectively, that intersect nontrivially.  Then
\[
\rank(H\cap K) -1 \ \leq \ 2(h -1)(k-1) - (h -1)\big(\rank(H\vee K) - 1\big).
\]
\end{theorem}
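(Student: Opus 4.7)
The plan is to use the Stallings graph formalism.  Let $\GH$ and $\GK$ be based folded Stallings graphs for $H$ and $K$, so $-\chi(\GH) = h-1$ and $-\chi(\GK) = k-1$.  The graph $\GHvK$, obtained from the wedge $\GH\vee\GK$ by Stallings folds, is the Stallings graph of $H\vee K$ and has $-\chi(\GHvK) = r-1$ where $r = \rank(H\vee K)$; the folds induce label-preserving maps $\iota_H\co \GH\to\GHvK$ and $\iota_K\co \GK\to\GHvK$ which, since $\GH$ and $\GK$ are folded, are immersions.  I form the fiber product $\mathcal{P} = \GH\times_{\GHvK}\GK$, whose based component $\mathcal{P}_0$ is a Stallings graph for $H\cap K$, so $\rank(H\cap K)-1 = -\chi(\mathrm{core}\,\mathcal{P}_0)$ and the theorem reduces to the estimate
\[
-\chi(\mathrm{core}\,\mathcal{P}_0) \ \le \ 2(h-1)(k-1) - (h-1)(r-1).
\]

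Writing $a(y) = |\iota_H^{-1}(y)|$ and $b(y) = |\iota_K^{-1}(y)|$ for a vertex or edge $y$ of $\GHvK$, one has the standard identity
\[
-\chi(\mathcal{P}) \ = \ \sum_{e \in E(\GHvK)} a(e)\,b(e) \;-\; \sum_{x \in V(\GHvK)} a(x)\,b(x).
\]
The immersion property supplies $a(e)\le a(x)$ and $b(e)\le b(x)$ for every non-loop edge $e$ incident to $x$, and there are the background identities $\sum_e a(e)-\sum_x a(x) = h-1$, the analogous identity for $b$, and $|E(\GHvK)|-|V(\GHvK)| = r-1$.  The classical double counting of Burns and W.~Neumann yields a bilinear bound on $-\chi(\mathcal{P})$ using only the $H$- and $K$-data; the novelty here is the $-(h-1)(r-1)$ correction.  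I would attempt a per-vertex rearrangement: at each $x\in V(\GHvK)$ with valence $d(x) = \val_{\GHvK}(x)$, bound the local contribution $\tfrac12\sum_{e\ni x} a(e)b(e) - a(x)b(x)$ by an expression containing a term proportional to $(h-1)(d(x)-2)/2$, and then sum using $\sum_x(d(x)-2)/2 = r-1$ to extract the desired $(h-1)(r-1)$ from the bound.

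The principal obstacle is that the theorem concerns the based component $\mathcal{P}_0$ alone, rather than the sum $\sum_{HgK}\bigl(\rank(H\cap gKg^{-1}) - 1\bigr)$ captured by all essential components of $\mathcal{P}$.  For large $r$ the theorem's right-hand side is in fact strictly smaller than the Burns/W.~Neumann bound on this sum, so the estimate cannot be derived by simply discarding off-diagonal double cosets; one must work with $\mathcal{P}_0$ directly.  A plausible route is induction on the sequence of Stallings folds collapsing $\GH\vee\GK$ onto $\GHvK$: at the initial stage $\mathcal{P}$ is a single point and the desired inequality holds with room to spare (by $(h-1)(k-h)+1$), and at each fold one verifies that the correlated changes in $-\chi(\GHvK)$ and $-\chi(\mathcal{P}_0)$ preserve the inequality.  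Alternatively, one could lift to the universal cover, where $\GH$ and $\GK$ lift to subtrees $T_H, T_K$ of $T = \widetilde{\GHvK}$ and the cocompact quotient $(T_H\cap T_K)/(H\cap K)$ recovers $\mathrm{core}\,\mathcal{P}_0$, converting the estimate into an Euler-characteristic statement about this intersection of subtrees amenable to the valence-based counting above.
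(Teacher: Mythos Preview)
Your proposal correctly identifies the central obstacle and then fails to overcome it.  You acknowledge that the bilinear identity
\[
-\chi(\mathcal{P}) \;=\; \sum_{e} a(e)\,b(e) \;-\; \sum_{x} a(x)\,b(x)
\]
controls only the full pullback, which even over $\GHvK$ contains one essential component for each double coset $HgK$ with $g \in H\vee K$ and $H\cap gKg^{-1}\neq 1$; for large $r=\rank(H\vee K)$ the right side of the theorem falls below the Burns--W.~Neumann bound on this sum, so no purely local rearrangement of the bilinear identity can succeed.  Your suggested per-vertex term ``proportional to $(h-1)(d(x)-2)/2$'' has no local source: $h-1$ is a global quantity and does not distribute to vertices of $\GHvK$.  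The induction-on-folds route is also not a proof as stated: at intermediate stages the maps $\GH\to\Gamma'$ and $\GK\to\Gamma'$ need not be immersions, the ``based component'' of a pullback is not obviously well-behaved under a single fold, and you offer no mechanism for tracking the change in $-\chi(\mathcal{P}_0)$ against the change in $r$.

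The paper's argument bypasses this difficulty by a different construction.  Rather than pulling back over $\GHvK$, it forms the \emph{topological pushout} $\T$ of $\GH$ and $\GK$ along $\GHK$ alone (the core of your $\mathcal{P}_0$), obtaining a labeled graph that folds onto $\GHvK$ and hence satisfies $-\chi(\T)\ge r-1$.  The combinatorics of $\T$ are governed entirely by branch vertices of $\GHK$: after a normalization making all branch vertices trivalent, one shows $-\chi(\T)$ is at most half the number of $\mathfrak R$-equivalence classes of trivalent stars in $\GH\sqcup\GK$, where $\mathfrak R$ is the relation generated by the two projections from $\GHK$.  Encoding branch vertices of $\GHK$ in a $(2h-2)\times(2k-2)$ zero--one matrix $M$, these classes correspond to a block decomposition $M=\mathrm{diag}(M_1,\dots,M_\ell,\mathbf 0_{p\times q})$ with $\ell+p+q\ge 2(r-1)$, and a simple entry-sum estimate on such block matrices yields the inequality.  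The key point you are missing is precisely this dual object $\T$: it is built from $\GHK$ only, so it sees \emph{only} the based component, yet still dominates $\GHvK$ in negative Euler characteristic.
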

\noindent This theorem, with an additional hypothesis, is claimed by W. Imrich and T. M\"uller in \cite{imrichmuller}.  
Unfortunately, their proof contains an error---see the end of the Section \ref{backgroundsection} for a detailed discussion.  
Note that the hypothesis on the intersection cannot be dispensed with entirely, for when $h=k \geq 3$, the inequality will fail if $\rank(H\vee K) = 2k$---but this is the only situation in which it fails.

\bigskip
\noindent
We were brought to Theorem \ref{strongburns} by the following question of M. Culler and P. Shalen.
\begin{question} If $H$ and $K$ are two rank--$2$ subgroups of a free group and $H \cap K$ has rank two, must their join have rank two as well?
\end{question}

\noindent An affirmative answer follows immediately from Theorem \ref{strongburns}, and we record this special case as a theorem---this has also been derived using Louder's folding machine by Louder and D. B. McReynolds \cite{louder}, independently of the work here.\footnote{This theorem was proven by both parties before Theorems \ref{particulartheorem} and \ref{strongburns} were proven. }
\begin{theorem}[Kent, Louder--McReynolds]\label{main} Let $H$ and $K$ be rank--2 subgroups of a free group $F$.  
Then 
\[
\rank(H\cap K) \leq 4 - \rank(H\vee K).
\]
\end{theorem}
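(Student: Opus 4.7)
The plan is to deduce Theorem \ref{main} as an immediate specialization of Theorem \ref{strongburns}, handling the degenerate case where $H\cap K$ is trivial by a simple generating-set argument.

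First I would dispose of the case where $H$ and $K$ intersect trivially. In that situation $\rank(H\cap K) = 0$, and the desired inequality reduces to $\rank(H\vee K) \leq 4$. But any generating set for $H\vee K$ obtained as the union of two-element generating sets for $H$ and $K$ has at most four elements, so this bound is automatic and the conclusion holds.

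Now assume $H\cap K$ is nontrivial, so that the hypotheses of Theorem \ref{strongburns} are satisfied with $h = k = 2$. Substituting $h = k = 2$ into
\[
\rank(H\cap K) - 1 \leq 2(h-1)(k-1) - (h-1)\bigl(\rank(H\vee K) - 1\bigr)
\]
gives
\[
\rank(H \cap K) - 1 \leq 2 - \bigl(\rank(H\vee K) - 1\bigr) = 3 - \rank(H\vee K),
\]
and rearranging yields exactly the bound $\rank(H\cap K) \leq 4 - \rank(H\vee K)$ claimed in Theorem \ref{main}.

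The only real obstacle is Theorem \ref{strongburns} itself; once that is in hand, Theorem \ref{main} requires nothing more than the arithmetic above together with the trivial-intersection observation. Since we are permitted to invoke earlier results from the excerpt, no further argument is needed, and I would present the proof as a short corollary immediately after Theorem \ref{strongburns} is established.
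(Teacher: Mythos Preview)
Your proposal is correct and matches the paper's approach: the paper simply states that Theorem \ref{main} ``follows immediately from Theorem \ref{strongburns},'' and your write-up is exactly that derivation, with the trivial-intersection case handled explicitly (which the paper leaves implicit, noting only that the hypothesis on the intersection is needed for $h=k\geq 3$). No further comment is necessary.
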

\noindent
In \cite{loudermcreynolds}, Louder and McReynolds also give a new proof of W. Dicks' theorem \cite{dicks} that W. Neumann's strong form of the Hanna Neumann Conjecture is equivalent to Dicks' Amalgamated Graph Conjecture.

Theorem \ref{main} allows Culler and Shalen to prove the following, see \cite{cullershalen}.
Recall that a group is \textbf{$k$--free} if all of its $k$--generator subgroups are free.

\begin{thm}[Culler--Shalen] Let $G$ be a $4$--free Kleinian group.
Then there is a point $p$ in $\mathbb H^3$ and a cyclic subgroup $C$ of $G$ such that for any element $g$ of $G-C$, the distance between $p$ and $g p$ is at least $\log 7$.
\end{thm}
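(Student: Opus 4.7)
The plan is to combine Theorem~\ref{main} with the Anderson--Canary--Culler--Shalen $\log(2k-1)$ displacement inequality: if $\xi_1,\dots,\xi_k$ freely generate a discrete purely loxodromic subgroup of $\mathrm{Isom}^+(\mathbb H^3)$, then for every $p\in\mathbb H^3$,
\[
\sum_{i=1}^{k}\frac{1}{1+e^{d(p,\xi_i p)}}\;\leq\;\tfrac{1}{2}.
\]
At $k=4$ this forces $\max_i d(p,\xi_i p)\geq\log 7$: a rank-$4$ free purely loxodromic subgroup of $G$ cannot have all four free generators displacing a single point by strictly less than $\log 7$. The proof is by contradiction, with the goal of producing---assuming the conclusion fails---four elements of $G$ at some $p$ freely generating a rank-$4$ subgroup and each displacing $p$ by less than $\log 7$.

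Assume toward contradiction that for every $p$ and every cyclic $C\leq G$ there is $g\in G-C$ with $d(p,gp)<\log 7$; write $S_p:=\{g\in G:d(p,gp)<\log 7\}$, so the assumption says $S_p$ is never contained in a cyclic subgroup. Reduce to the purely loxodromic case---$4$-freeness (hence $2$-freeness) forces parabolic subgroups to be cyclic, and these can be handled separately---and fix an optimizing $p$ in Culler--Shalen fashion, say one minimizing the maximum displacement of a preferred pair of non-commuting elements. By $4$-freeness, every subgroup of $G$ generated by at most four elements is free of rank at most $4$, so the goal reduces to finding four elements of $S_p$ whose generated subgroup has rank exactly $4$.

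Here Theorem~\ref{main} is the structural input: for any two rank-$2$ subgroups $A,B\leq G$ (free by $4$-freeness),
\[
\rank(A\cap B)+\rank(A\vee B)\leq 4,
\]
so while $A\vee B$ of rank $2$ is compatible with a rank-$2$ intersection, as soon as $A\vee B$ has rank $3$ or $4$ the intersection is forced to have rank $\leq 1$ or $0$, respectively. The strategy is to start from a rank-$2$ piece $A\subseteq\langle S_p\rangle$ (two non-commuting elements of $S_p$, which exist since $S_p$ is not cyclic), apply the failure hypothesis with $C$ ranging over the maximal cyclic subgroups of $A$ to extract short elements $g\notin C$, and combine; Theorem~\ref{main} converts control of intersections into rank growth of the join, producing in finitely many steps a rank-$4$ free subgroup of $\langle S_p\rangle$ all of whose generators displace $p$ by less than $\log 7$, contradicting the $\log(2k-1)$ inequality at $k=4$.

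The main obstacle is intersection control: when a new short element $g\notin C$ is adjoined to $A$ to form a second rank-$2$ piece $B$, one needs $A\cap B$ to be cyclic or trivial, so that Theorem~\ref{main} promotes the join to rank $\geq 3$. The hope is that the geometric choice of $p$ does this work---the cyclic subgroups $C$ arising as intersections correspond to short translation axes near $p$, and the failure hypothesis lets one jump past each such exceptional $C$ to find a short element genuinely transverse to $A$. The delicate interplay between the hyperbolic side (locating short elements) and the combinatorial side (Theorem~\ref{main} forbidding unwanted rank-$2$ overlaps) is where the argument lives or dies.
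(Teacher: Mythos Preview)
The paper does not contain a proof of this theorem.  It is stated as a result of Culler and Shalen, with the remark that Theorem~\ref{main} ``allows Culler and Shalen to prove the following, see \cite{cullershalen}.''  So there is no proof in the paper to compare against; the actual argument lives entirely in \cite{cullershalen}.

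Your outline is headed in the right direction in that the Culler--Shalen argument does combine Theorem~\ref{main} with the $\log(2k-1)$ displacement inequality, and your instinct to look for four short elements generating a rank--$4$ free group is sound.  But what you have written is a plan, not a proof.  The entire difficulty lies in the step you flag yourself: given a rank--$2$ piece $A$ generated by short elements, producing a further short element $g$ so that the resulting rank--$2$ piece $B$ has $\rank(A\cap B)\leq 1$, hence $\rank(A\vee B)\geq 3$ by Theorem~\ref{main}, and then iterating.  You say ``the hope is that the geometric choice of $p$ does this work'' and that this ``is where the argument lives or dies''---that is exactly right, and you have not supplied that step.  The failure hypothesis only gives you, for each cyclic $C$, a short element outside $C$; it does not by itself prevent every new short element from generating together with something in $A$ a rank--$2$ group meeting $A$ in rank $2$.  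Converting the geometric input into the needed intersection control is the substance of the Culler--Shalen paper, and it is genuinely delicate; it is not something one can wave through.
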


\noindent This has the following consequence, see \cite{cullershalen}.
\begin{thm}[Culler--Shalen] Let
$M$ be a closed orientable hyperbolic $3$--manifold such that $\pi_1(M)$ is $4$--free.  Then the volume of $M$ is at least $3.44$.
\end{thm}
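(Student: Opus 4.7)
The plan is to invoke the preceding theorem (Culler--Shalen's $\log 7$ displacement bound for $4$--free Kleinian groups) and then pass to the quotient to extract a lower bound on volume via either an embedded ball or a Margulis tube around a short closed geodesic. Write $M = \mathbb H^3 / G$ with $G = \pi_1(M)$ acting by orientation-preserving isometries. Since $G$ is $4$--free, the previous theorem supplies a point $p \in \mathbb H^3$ and a cyclic subgroup $C \leq G$ such that $d(p, gp) \geq \log 7$ for every $g \in G - C$. Set $r = \tfrac{1}{2}\log 7$; the open ball $B(p,r)$ is then disjoint from $g\, B(p,r)$ whenever $g \notin C$, so its image in $M$ is embedded away from identifications coming from $C$.

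The next step is a case analysis on $C$. Since $M$ is closed, $G$ contains no parabolic elements, so $C$ is either trivial or loxodromic. If $C$ is trivial, the ball $B(p,r)$ descends to an embedded ball of radius $r = \tfrac{1}{2}\log 7$ in $M$, whose volume $\pi(\sinh(2r) - 2r) = \pi(\tfrac{24}{7} - \log 7)$ already exceeds $3.44$. If $C = \langle \gamma \rangle$ is loxodromic with axis $A_\gamma$, then the image of $p$ in $M$ lies in a tubular neighborhood $T$ of the closed geodesic $\bar A_\gamma = A_\gamma / C$, and by the disjointness of $g\, B(p,r)$ for $g \notin C$, the tube of radius $r$ about the portion of $A_\gamma$ inside $B(p,r)$ embeds in $M$.

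The main work, and the real obstacle, is converting this local embedded-tube information into a quantitative volume lower bound. The standard route is Meyerhoff's tube-volume formula together with his work on Margulis constants: one bounds the complex translation length of $\gamma$ using the displacement estimate at $p$, and then integrates the tube-volume function, which is $\pi \ell(\gamma) \sinh^2 R$ for a tube of radius $R$ around a geodesic of length $\ell(\gamma)$. Optimizing $R$ against $\ell(\gamma)$ subject to the constraint that every non-$C$ translate of $p$ lies at distance at least $\log 7$ shows the tube-volume function is monotone increasing in the relevant regime and bounded below by the constant $3.44$. The delicate point is to track the effect of the axis not necessarily passing through $p$: one needs the displacement bound at $p$ to translate into a lower bound on the radius $R$ of an embedded tube about $\bar A_\gamma$, which is a standard but careful hyperbolic-geometry estimate relating $d(p, A_\gamma)$, $\ell(\gamma)$, and $R$.

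Combining the two cases, the infimum of the resulting volume bounds is at least $3.44$, and this infimum is achieved (to numerical precision) in the tube case rather than the embedded-ball case, explaining why the final numerical value is $3.44$ and not the larger ball-volume figure $\pi(\tfrac{24}{7} - \log 7)$. The hard part is therefore not the group-theoretic input, which is handed to us by the previous theorem, but the hyperbolic-geometric optimization inside a Margulis tube; everything else is bookkeeping.
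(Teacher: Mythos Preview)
The paper does not contain a proof of this theorem. It is stated as a result of Culler and Shalen, attributed to them and accompanied only by the reference \cite{cullershalen}; the paper's contribution is Theorem~\ref{main}, which Culler and Shalen use as an ingredient. So there is no ``paper's own proof'' to compare your proposal against.

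That said, your outline is a reasonable sketch of the standard shape of such arguments: invoke the $\log 7$ displacement statement, split into the cases $C$ trivial (embedded ball) and $C$ loxodromic (embedded tube), and in the latter case feed the displacement constraint into tube-volume estimates in the style of Meyerhoff. Whether this is actually how Culler and Shalen obtain the constant $3.44$ you would have to check in \cite{cullershalen}; the optimization you gesture at in the tube case is genuinely delicate, and your sketch does not carry it out, so as written it is an outline rather than a proof. But none of that is relevant to a comparison with the present paper, which simply cites the result.
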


Theorem  \ref{main} is sharp in that, given nonnegative integers $m$ and $n$ with $n\geq 2$ and $m \leq 4-n$, there are $H$ and $K$ of rank two with $\rank(H\cap K) = m$ and $\rank(H\vee K) = n$.

To see this, note that, by Burns' theorem, the rank of $H \cap K$ is at most two.

If $H \vee K$ has rank four, then, since finitely generated free groups are Hopfian, we have $H \vee K = H * K$, and hence $H \cap K = 1$.  

If the join has rank two, $H \cap K$ may have rank zero, one, or two.  For completeness, we list examples. 
If $H=K$, then $H\cap K = H = H\vee K$.  
If $H=\langle a, bab \rangle$ and $K=\langle b,a^2 \rangle$, the join is $\langle a,b \rangle$ and the intersection is $\langle a^2 \rangle$. 
If $H=\langle a, bab \rangle$ and $K = \langle b, ab^{-1}aba^{-1} \rangle$, then $H \cap K =1$ and the join is $\langle a, b \rangle$.

Finally, there are rank two $H$ and $K$ whose join has rank three and whose intersection is trivial.  For example, consider the free group on $\big\{a,b,c\}$ and let $H = \langle c, a^{-1} b a\rangle$ and $K = \langle a, b^{-1}cb \rangle$.  Of course, there are rank two $H$ and $K$ whose intersection is infinite cyclic and whose join has rank three, like $\langle a, b \rangle$ and $\langle b, c \rangle$ in a free group on $\{a,b,c\}$.

\subsection*{Perspective}
 
The heart of the work here lies in the study of a certain pushout and the restraints it places on the rank of the intersection $H \cap K$. 
The pictures that emerge here and in the work of Louder and McReynolds \cite{loudermcreynolds} share a common spirit, and both are akin to the work of W. Dicks \cite{dicks}.
The arguments here are chiefly combinatorial; those of \cite{loudermcreynolds} more purely topological. 
Whilst having the same theoretical kernel, the two discussions each have their own merits, and the authors have decided to preserve them in separate papers. 

\bigskip
\noindent \textbf{Acknowledgments.} The author thanks Warren Dicks, Cameron Gordon, Wilfried Imrich, Lars Louder, Joe Masters, Ben McReynolds, Walter Neumann, and Alan Reid for lending careful ears.  He thanks Ben Klaff for bringing Culler and Shalen's question to his attention.
The author also extends his thanks to the referee for many thoughtful comments that have improved the exposition tremendously.

When the author first established Theorem \ref{main}, he used the pushout of $\GH$ and $\GK$ along the component of $\g$ carrying the group $H \cap K$, rather than the pushout $\T$ along the core $\GHK$---the former is somewhat disagreeable, and may possess special vertices.  
In correspondence with Louder, it was Louder's use of the core $\GHK$ that prompted the author's adoption of the graph $\T$. 
The author thus extends special thanks to Louder.

\section{Graphs, pullbacks, and pushouts}\label{backgroundsection}
We may assume that $F$ is free on the set $\{a, b\}$, and we do so. 
We identify $F$ with the fundamental group of a wedge $\X$ of two circles based at the wedge point, and we orient the two edges of $\X$.  

We have distilled here the notions of \cite{stallings} and \cite{gersten} into a form that is convenient for our purpose.

Given a subgroup $H$ of $F$, there is a covering space $\widetilde \X_H$ corresponding to $H$. 
There is a unique choice of basepoint $*$ in $\widetilde \X_H$ so that $\pi_1(\widetilde \X_H, *)$ is identical to $H$. 
We let $\GH$ denote the smallest subgraph of $\widetilde \X_H$ containing $*$ that carries $H$. 
The graph $\GH$ comes naturally equipped with an \textbf{oriented labeling}, meaning that each edge is oriented and labeled with an element of $\{a, b\}$. 
The orientation of a given edge $e$ yields an \textbf{initial vertex} $\iota(e)$ and a \textbf{terminal vertex} $\tau(e)$, which may or may not be distinct.

The graphs so far discussed are labeled \textbf{properly}, meaning that if edges $e$ and $f$ have the same labeling and either $\iota(e) = \iota(f)$ or $\tau(e) = \tau(f)$, then the two edges agree.

The \textbf{star} of a vertex $v$, written $\st(v)$, is the union of the edges incident to $v$ equipped with the induced oriented labeling. 
The \textbf{valence} of a vertex $v$ is the number of edges incident to $v$ counted with multiplicities. 
All of the above graphs are at most $4$--valent, meaning that their vertices have valence at most four. 
A vertex is a \textbf{branch vertex} if its valence is at least $3$. 
We say that a vertex is \textbf{extremal} if its valence is less than or equal to one.  
We say that a graph is \textbf{$k$--regular} if all of its \textit{branch} vertices have valence $k$.

A \textbf{map of graphs} between two oriented graphs is a map that takes vertices to vertices, edges to edges, and preserves orientations. 
A map of graphs is an \textbf{immersion} if it is injective at the level of edges on all stars.
A \textbf{labeled map of graphs} between two labeled oriented graphs is a map of graphs that preserves labels.
A  \textbf{labeled immersion} is a labeled map of graphs that is also an immersion.

Two $k$--valent vertices of labeled oriented graphs are of the \textbf{same type} if there is a labeled immersion from the star of one to the star of the other.

J. Stallings' category of oriented graphs is the category whose objects are oriented graphs (without labelings), and whose morphisms are maps of graphs---S. Gersten's category has the same objects, but more maps \cite{gersten}.
The collection of all oriented graphs with labels in $\{a, b\}$ together with all labeled maps of graphs form a category that we call the \textbf{category of labeled oriented graphs}---there is an obvious forgetful functor into Stallings' category.

We will also consider the \textbf{category of properly labeled oriented graphs}, whose objects are properly labeled oriented graphs and whose morphisms are labeled immersions.

Given a graph $\Gamma$, let $V(\Gamma)$ be its set of vertices.

We define a graph $\g$ as follows.  
Its set of vertices is the product $V(\GH) \times V(\GK)$ and there is an edge labeled $x$ joining $(a,b)$ to $(c,d)$ oriented from $(a,b)$ to $(c,d)$ if and only if there is an edge in $\GH$ labeled $x$ joining $a$ to $c$ oriented from $a$ to $c$ \textit{and} an edge in $\GK$ labeled $x$ joining $b$ to $d$ oriented from $b$ to $d$. 
The graph $\g$ is the \textbf{fiber product} of the maps $\GH \to \X$ and $\GK \to \X$---in other words, the pullback of the diagram
\[
 \xymatrix{  &   \GH \ar[d]    \\
  \GK \ar[r] & \X
    } 
\]
 in the category of oriented graphs---it is also the pullback in the category of properly labeled oriented graphs, and in this category, it is in fact the direct product $\GH \times \GK$.
 
 The graph $\GHK$ is a subgraph of $\g$, and carries the fundamental group \cite{stallings}.

Note that there are projections $\Pi_H\co \g \to \GH$ and $\Pi_K\co \g \to \GK$ and that a path $\gamma$ from $(*,*)$ to $(u,v)$ in $\g$ projects to paths $\Pi_H(\gamma)$ and $\Pi_K(\gamma)$ with the same labeling from $*$ to $u$ and $*$ to $v$, respectively. Conversely, given two pointed paths $\gamma_H$ and $\gamma_K$ with identical oriented labelings from $*$ to $u$ and $*$ to $v$ respectively, there is an identically labeled path $\gamma$ in $\g$ from $(*,*)$ to $(u,v)$ that projects to $\gamma_H$ and $\gamma_K$.

Given a graph $\Gamma$ with an oriented (nonproper) labeling, a \textbf{fold} is the following operation: if $e_1$ and $e_2$ are two edges of $\Gamma$ with the same label and $\iota(e_1) = \iota(e_2)$ or $\tau(e_1)=\tau(e_2)$, identify $e_1$ and $e_2$ to obtain a new graph.  

The properly labeled graph $\GHvK$ is obtained from $\GH$ and $\GK$ by forming the wedge product of $\GH$ and $\GK$ at their basepoints and folding until no more folding is possible. 

 In what follows, we identify $\GH$ and $\GK$ with their images in $\GH \sqcup \GK$ whenever convenient.

 The graph $\GHvK$ is the pushout in the category of \textit{properly labeled} oriented graphs of the diagram
 \[
 \xymatrix{ \ast   \ar[r]  \ar[d]
 &   \GH   \\
  \GK
    } 
\]
where the single point $\ast$ maps to the basepoints in $\GH$ and $\GK$.
This category is somewhat odd in that $\GHvK$ is also the pushout of 
\[
 \xymatrix{\GHK   \ar[r]  \ar[d]
 &   \GH   \\
  \GK
    } 
\]

We will make use of a labeled oriented graph that is not properly labeled.  This is the \textbf{topological pushout} $\T$ of the diagram
\[
 \xymatrix{\GHK   \ar[r]  \ar[d]
 &   \GH   \\
  \GK
    } 
\]
The letters $x$ and $y$ will denote points in $\GH$ and $\GK$, respectively.
The graph $\T$ is the quotient of $\GH \sqcup \GK$ by the equivalence relation $\mathfrak{R}$ \textit{generated} by the relations $x \sim y$ if $x \in \Pi_H\big({\Pi_K}^{\! \! \! -1}(y)\big)$ or $y \in \Pi_K\big({\Pi_H}^{\! \! \! -1}(x)\big)$. 

So, points $a,b \in \GH \sqcup \GK$  map to the same point in $\T$ if and only if there is a sequence $\{(x_i,y_i)\}_{i=1}^n$ in $\GHK$ such that $a$ is a coordinate of $(x_1,y_1)$, $b$ is a coordinate of $(x_n,y_n)$, and for each $i$ either $x_i = x_{i+1}$ or $y_i = y_{i+1}$. 
We call such a sequence a \textbf{sequence for $a$ and $b$}.
Note that a minimal sequence for $x$ and $y$ will not have $x_i = x_j=x_k$ or $y_i = y_j=y_k$ for any pairwise distinct $i$, $j$, and $k$.

We warn the reader that the equivalence relation on $\GH \sqcup \GK$ whose quotient is $\GHvK$ is typically coarser than the one just described.
For instance, in the example in Figure \ref{counterexample}, $\GHvK$ is $\X$, but $\T$ is not.
\begin{figure}
\begin{minipage}{1\textwidth}
\ \ 
\includegraphics{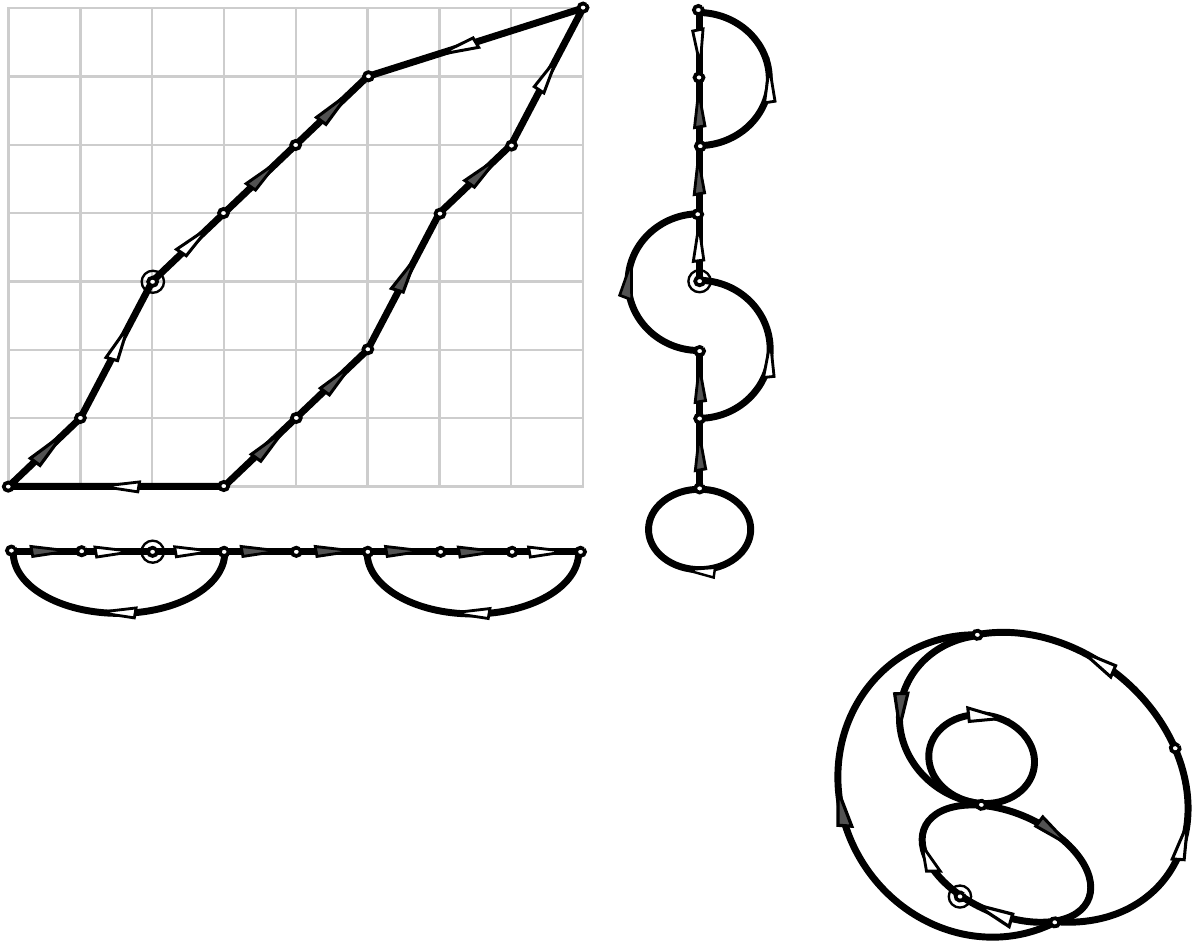}
\end{minipage}
\begin{minipage}{.665\textwidth}
\vspace{-88pt}
\caption{Above, 
the graph
$\GK$ is at the bottom, $\GH$ at the right.
Their basepoints are encircled.
White arrows correspond to $a$, black arrows $b$.
Writing $x^g = gxg^{-1}$,
we have 
$K = \langle a^2ba,   (b^2a^2)^{ab^2} \rangle$, 
$H = \langle  ab^{-2}a,  (ba^{-2})^{ab}, a^{a^{-1}b^{-1}}\rangle$,
${H \cap K} = \langle ab^2a^{-2}b^{-4}aba\rangle$, 
and $H\vee K = \langle a, b \rangle$. 
The graph $\T$ is to the right.
Notice that $\chi(\T) = -3 < -1 = \chi(\GHvK)$.
}\label{counterexample}
\end{minipage}
\end{figure}

The graph $\T$ is also the pushout in the category of labeled oriented graphs, but not necessarily the pushout in the category of \textit{properly} labeled oriented graphs---again, see Figure \ref{counterexample}.

Though not equal to $\GHvK$ in general, 
$\T$ does fit into the commutative diagram
\[
 \xymatrix{\GHK  \ar[r]^{\Pi_H}  \ar[d]_{\Pi_K} &   \GH   \ar[d] \ar[ddr] \\
  \GK \ar[r] \ar[drr] & \T \ar[dr] \\
  & & \GHvK
    } 
\]
where the map $\T \to \GHvK$ factors into a series of folds.
As a fold is surjective at the level of fundamental groups, see \cite{stallings}, we have $\chi(\T) \leq \chi(\GHvK)$.

\bigskip
\noindent
Confusing $\T$ and $\GHvK$ can be hazardous, and we call $\T$ the topological pushout to prevent such confusion. 
This is the source of the error in \cite{imrichmuller}, which we now discuss.

The proof of the lemma on page 195 of \cite{imrichmuller} is incorrect.
The error lies in the last complete sentence of that page:
\begin{quote}
In order that both $x$ and $y$ be mapped onto $z$ there must be a sequence
\[
x = x_0, x_1, x_2, \ldots, x_n = y
\]
of vertices of $\Gamma_0$ such that for every $i$ $x_i$ and $x_{i+1}$ have the same image in either $\Gamma_1$ or $\Gamma_2$ (and all are mapped to $z$ in $\Delta$).
\end{quote}
Here $\Gamma_0$ is our graph $\GHK$, the graphs $\Gamma_1$ and $\Gamma_2$ are our graphs $\GH$ and $\GK$, and the graph $\Delta$ is our $\GHvK$. 
Here is a translation of this into our terminology:
\begin{quote}
Let $z$ be a vertex in $\GHvK$ and let $a$ and $b$ be vertices of $\GH \sqcup \GK$ that map to $z$. In order that both $a$ and $b$ be mapped onto $z$, there must be a sequence for $a$ and $b$.
\end{quote}
This statement is false. 
The example in Figure \ref{counterexample} is a counterexample: the graph
$\GHvK$ is the wedge of two circles with a vertex $z$, say, and so all vertices in $\GH \sqcup \GK$ map to $z$ under the quotient map $\GH \sqcup \GK \to \GHvK$; on the other hand, the basepoints for $\GH$ and $\GK$ are the only vertices in their $\mathfrak{R}$--equivalence class---as is easily verified by sight.

The statement \textit{is} correct once $\GHvK$ has been replaced by $\T$, but, unfortunately, the arguments in \cite{imrichmuller} rely on the fact that $\GHvK$ is $3$--regular, a property that $\T$ does not generally possess.

The lemma in \cite{imrichmuller} would be quite useful, and though its proof is incorrect, we do not know if the lemma actually fails.\footnote{\textit{History of the error:} In the Fall of 2005, the author produced a faulty proof of Theorem \ref{main}.  Following this, he discovered the paper \cite{imrichmuller}, from which Theorem \ref{main} would follow. Unable to prove the existence of the sequence $x = x_0, x_1, x_2, \ldots, x_n = y$ in the quoted passage, the author contacted Imrich.  Amidst the resulting correspondence, the author found the example in Figure \ref{counterexample}.}

\section{Estimating the Euler characteristic of $\T$}

Let $H$ and $K$ be subgroups of $F$ of ranks $h$ and $k$. 
Suppose that $H \cap K \neq 1$.

For simplicity, we reembed $H\vee K$ into $F$ so that all branch vertices in $\GHvK$ are $3$--valent and of the same type: we replace $H\vee K$ with its image under the endomorphism $\varphi$ of $F$ defined by $\varphi(a) = a^2$, and $\varphi(b) = [a,b] = aba^{-1}b^{-1}$. 
Note that this implies that all branch vertices of $\GH$ and $\GK$ are $3$--valent and of the same type.

If a restriction of a covering map of graphs fails to be injective on an edge, then the edge must descend to a cycle of length one.
So our normalization above guarantees that the restriction of the quotient $\GH \sqcup \GK \to \GHvK$ to any edge is an embedding (as the target has no unit cycles), and hence \textit{the restriction of the quotient $\GH \sqcup \GK \to \T$ to any edge is an embedding}.

We claim that it suffices to consider the case where neither $\GH$ nor $\GK$ possess extremal vertices.  
It is easy to see that by conjugating $H\vee K$ in $F$, one may assume that $\GH$ has no extremal vertices, and we assume that this is the case.
Let $p$ and $q$ be the basepoints of $\GH$ and $\GK$, respectively.
Suppose that $q$ is extremal.  
Let $\gamma$ be the shortest path in $\GK$ starting at $q$ and ending at a branch vertex. 
Suppose that $\gamma$ is labeled with a word $w$ in $F$.
Since $H \cap K$ is not trivial, the graph $\GHK$ contains a nontrivial loop based at $(p,q)$, and so there is a path $\delta$ in $\g$ starting at $(p,q)$ labeled $w$.  
Now $\delta$ projects to a path in $\GH$ starting at $p$ that is labeled $w$.  
This means that if we conjugate $H\vee K$ by $v = w^{-1}$, the graphs $\Gamma_{\! H^v}$ and $\Gamma_{\! K^v}$ have no extremal vertices, and of course, $\rank\big(H^v \cap K^v\big) = \rank\big((H \cap K)^v\big) = \rank(H\cap K)$, and $\rank\big((H\vee K)^v\big) = \rank(H\vee K)$.

We assume these normalizations throughout.
Note that since $\GH$ and $\GK$ have no extremal vertices, neither does $\GHK$.


\subsection{Stars}

If $\Gamma$ is a graph, let $\mathbf{b}(\Gamma)$ denote the number of branch vertices in $\Gamma$.
If $\Gamma $ is $3$--regular, then $-\chi(\Gamma) = \rank\big(\pi_1(\Gamma)\big) -1=\mathbf{b}(\Gamma)/2$.

Consider the topological pushout $\T$ of $\GH$ and $\GK$ along $\GHK$, and the equivalence relation $\mathfrak{R}$ on $\GH \sqcup \GK$ that defines it. 
Again, $-\chi(\T) \geq \rank(H\vee K) -1$.

This section is devoted to the proof of the following theorem---compare Lemma 5.3 of \cite{louderfolding}.
We estimate the Euler characteristic of $\T$ by studying the set of $\mathfrak{R}$--equivalence classes of stars.
The equivalence class of the star of a vertex $b$ in $\GH \sqcup \GK$ is denoted by $[\st(b)]_\mathfrak{R}$.
If $X$ is a set, $\#X$ will denote its cardinality.
\begin{theorem}\label{eulertheorem}
\begin{equation}\label{euler}
-\chi(\T) \leq \frac{1}{2} \# \big\{ [\st(b)]_\mathfrak{R}\ \big | \ b \in \GH \sqcup \GK \  \mathrm{and}\ \mathrm{valence}(b)=3 \big\}.
\end{equation}
\end{theorem}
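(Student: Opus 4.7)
The plan is to reduce the Euler-characteristic estimate to a local count at each vertex of $\T$. Since $\GH$ and $\GK$ have no extremal vertices, neither do $\GHK$ and $\T$, so every vertex of $\T$ has valence at least $2$ and
\[
-\chi(\T)\ =\ \frac{1}{2}\sum_{\substack{v\in V(\T)\\ \val(v)\geq 3}}\big(\val(v)-2\big).
\]
For each $v\in V(\T)$ let $C_v\subset V(\GH\sqcup\GK)$ denote its preimage. Star classes at distinct vertices of $\T$ are automatically distinct (they have different centers), so it suffices to establish the local inequality
\[
\#\big\{[\st(b)]_\mathfrak{R} : b\in C_v,\ \val(b)=3\big\}\ \geq\ \val(v)-2
\]
at every $v$ with $\val(v)\geq 3$.

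Fix such a $v$. Every $b\in C_v$ has valence $2$ or $3$, since branch vertices of $\GH$ and $\GK$ are $3$-valent. Because $\g$ is properly labeled, no two distinct edges sharing an endpoint in $\GH\sqcup\GK$ can be identified in $\T$: if $(e,g)$ and $(e',g)$ were both edges of $\GHK$ with $e,e'$ distinct edges at some $b\in\GH$, the two $\GHK$-edges would share a vertex and a label in the properly labeled graph $\GHK$, a contradiction. Consequently the image $S(b)\subset\st(v)$ of $\st(b)$ in $\T$ is a set of $\val(b)$ distinct edges, and two $3$-valent $b,b'\in C_v$ yield the same star class exactly when $S(b)=S(b')$ as $3$-subsets of $\st(v)$. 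The local inequality thus becomes a purely combinatorial count of distinct $3$-subsets of $\st(v)$ realized as images of $3$-valent preimages.

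I would then prove that the number of such distinct $3$-subsets is at least $\val(v)-2$. Here the key inputs are (i) the uniform labeled type of all $3$-valent vertices (guaranteed by the $\varphi$-normalization) and (ii) the description of $C_v$ as a connected component of the ``incidence graph'' on $V(\GH)\sqcup V(\GK)$ whose edges are the vertices of $\GHK$. Tracing through the possible patterns by which edges of $\GHK$ force identifications at $v$, one shows first that $C_v$ contains a $3$-valent vertex as soon as $\val(v)\geq 3$, and then that every further increment of $\val(v)$ is accompanied either by a fresh $3$-valent preimage whose $S(b)$ has not been seen, or by an edge at $v$ that forces some existing $S(b)$ to split into two distinct subsets. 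Summing the resulting $\val(v)-2$ bound over branch vertices of $\T$ and dividing by $2$ yields the theorem.

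The main obstacle lies in the combinatorial step just described. One must rule out degenerate configurations in which many $3$-valent preimages share a single $S(b)$ while $\val(v)$ nonetheless grows—say, through long chains in $\GHK$ or through heavy interaction with $2$-valent preimages—and this is exactly where the uniform branch type and the proper labeling of $\GHK$ (rather than merely of $\GHvK$) must be used. Figure~\ref{counterexample} illustrates in a broader context how badly the topological pushout can behave without these constraints, underscoring that they are essential to the local count and explaining the parallel with Lemma~5.3 of \cite{louderfolding}.
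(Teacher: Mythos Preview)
Your reduction to the local inequality $\val(v)-2 \leq \#\{[\st(b)]_\mathfrak{R} : b\in C_v,\ \val(b)=3\}$ is exactly the paper's strategy, but the proposal stops where the real work begins: what you call the ``main obstacle'' is the entire substance of the proof, and you have not carried it out. Three things must actually be shown. First, every branch vertex of $\T$ is the image of some $3$-valent vertex (no \emph{special} vertices); the paper proves this by observing that a minimal $\mathfrak{R}$-sequence between $2$-valent preimages of different types would contain a term with mismatched $2$-valent coordinates, hence an extremal vertex of $\GHK$, which does not exist. Second---and this is the heart of the matter (Lemma~\ref{staridentification})---if $b_1,\ldots,b_n$ represent the distinct star classes over $z$, then adjoining each new $b_m$ increases the valence of $z$ in the union $\G^z$ of star-images by at most one: a \emph{minimal} sequence from $b_m$ to some earlier $b_j$ has only $2$-valent intermediate coordinates, all forced to share a single type, so the images of $\st(b_m)$ and $\st(b_j)$ overlap in at least two edges. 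Third, the star of every $2$-valent preimage of $z$ is absorbed into the image of some $3$-valent star, by the same minimal-sequence/type argument; this is what gives $\val_\T(z)=\val_{\G^z}(z)$. None of these steps appears in your proposal beyond a gesture, and the minimal-sequence/single-type mechanism that drives all three is never articulated.

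Two smaller points. Your justification that distinct edges at $b$ are not identified in $\T$ only treats a single step of the equivalence; the correct reason is that every step of an $\mathfrak{R}$-chain preserves label and orientation, while distinct edges at $b\in\GH\sqcup\GK$ carry distinct label/orientation pairs. And your alternative ``an edge at $v$ that forces some existing $S(b)$ to split into two distinct subsets'' is not coherent: $S(b)$ is a fixed $3$-subset of $\st(v)$ once $b$ is chosen and cannot split. The correct picture is the overlap mechanism above: each new star class shares two edges with an old one, so the valence grows by at most one per class.
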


In the following, we will denote the type of a $2$--valent vertex in $\GH$, $\GK$, or $\GHK$ by a Roman capital.

We say that a vertex $z$ is \textbf{special} if it is a branch vertex of $\T$ that is not the image of a branch vertex in $\GH$ or $\GK$---we will show that there are no such vertices.

\begin{lemma}\label{notallsame} Let $z$ be a special vertex of $\T$.  Then there are vertices $a$ and $b$ in $\GH \sqcup \GK$ that have different types and get carried to $z$. 
\end{lemma}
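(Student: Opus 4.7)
The plan is to argue by contradiction. I will assume that every preimage of $z$ in $\GH \sqcup \GK$ has the same type $T$, and derive a contradiction with $z$ being special. Since $z$ is special, no preimage is a branch vertex; since the normalization forbids extremal vertices in $\GH$ and $\GK$, each preimage has valence exactly $2$, and so $T$ must be one of the six possible $2$-valent types. The valence of $z$ in $\T$ equals the sum of valences of its preimages, hence equals $2m$ where $m$ is the number of preimages, and for $z$ to be a branch vertex ($\val(z) \geq 3$) we must have $m \geq 2$. Pick any two distinct preimages $v$ and $w$.

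Next, invoke the sequence characterization of $\mathfrak{R}$ to obtain a minimal sequence $(x_1, y_1), \ldots, (x_n, y_n) \in V(\GHK)$ with $v$ a coordinate of $(x_1, y_1)$, $w$ a coordinate of $(x_n, y_n)$, and consecutive pairs sharing a coordinate. Every coordinate appearing in this sequence is $\mathfrak{R}$-equivalent to $v$, so is a preimage of $z$, and hence by our assumption has type $T$. Because at each pair $(x_i, y_i)$ both coordinates share the same $2$-valent type $T$, the fiber product vertex $(x_i, y_i)$ has valence exactly $2$ in $\g$ (its two edges correspond to the two labeled oriented edges of $T$); since $\GHK$ has no extremal vertices, both of these edges lie in $\GHK$, and hence the two $T$-neighbors of $(x_i, y_i)$ in $\g$ are themselves in $V(\GHK)$.

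The hard part will be to convert this uniform type-$T$ structure into the existence of a branch vertex among the preimages of $z$, which is the needed contradiction. The plan is to focus on a pivot of the sequence: when $x_i = x_{i+1}$, the shared $\GH$-coordinate $x_i$ is paired in $V(\GHK)$ with two distinct type-$T$ vertices $y_i$ and $y_{i+1}$ of $\GK$, both of which are preimages of $z$. Since $x_i$ has only the two edges specified by $T$, propagating along a $\tau$-edge ($\tau \in T$) shows that the $T$-neighbor of $x_i$ in $\GH$ must pair in $V(\GHK)$ with the $\tau$-neighbors of both $y_i$ and $y_{i+1}$. Iterating this propagation around the finite graph $\GH \sqcup \GK$, one forces the situation to close up; a careful bookkeeping of the shared coordinates and the minimality of the sequence then shows that the only way to avoid a shortening of the sequence (contradicting minimality) is for some coordinate along the orbit to acquire a third incident edge, i.e., to be a branch vertex preimage of $z$, contradicting specialness. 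A symmetric analysis addresses pivots $y_i = y_{i+1}$, so in either case the assumption that all preimages have the same type fails and the desired pair $a,b$ of preimages of different types must exist.
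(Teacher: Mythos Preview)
Your setup matches the paper's: assume for contradiction that all preimages of $z$ have the same $2$--valent type $T$, take two preimages $v,w$, and pass to a sequence $\{(x_i,y_i)\}$ in $\GHK$ linking them. You correctly observe that every coordinate appearing is a preimage of $z$ and hence of type $T$, and that each $(x_i,y_i)$ is $2$--valent in $\GHK$.

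But there is a genuine gap beginning with the sentence ``The valence of $z$ in $\T$ equals the sum of valences of its preimages, hence equals $2m$.'' This is false: the equivalence relation $\mathfrak{R}$ is defined on all points of $\GH \sqcup \GK$, not just vertices, so edges are identified in $\T$ as well. Indeed, that is exactly what must be exploited. Once you know each $(x_i,y_i)$ is $2$--valent of type $T$ in $\GHK$, the two edges at $(x_i,y_i)$ project onto the two $T$--edges at $x_i$ and onto the two $T$--edges at $y_i$; hence $\st(x_i)$ and $\st(y_i)$ have the same image in $\T$. Since consecutive terms share a coordinate, all the stars $\st(x_i)$, $\st(y_i)$ are identified in $\T$, and since $v,w$ were arbitrary preimages, \emph{every} preimage of $z$ has its star identified with all the others. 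So $z$ is $2$--valent in $\T$, contradicting that $z$ is a branch vertex. This is the paper's argument, and it is two lines.

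Your ``hard part'' goes in the wrong direction: you try to force a branch vertex to appear among the preimages of $z$ via some propagation-and-minimality argument, but no such branch vertex need exist (and the argument as written is only a sketch of a plan, not a proof). The contradiction is not that a preimage is secretly $3$--valent; it is that $z$ itself is secretly $2$--valent.
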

\begin{proof} Suppose to the contrary that any $a$ and $b$ that get carried to $z$ have the same type.  

Let $a$ and $b$ be such a pair and let $\{(x_i,y_i)\}$ be a sequence for $a$ and $b$.  
Since $z$ is special, all of the $x_i $ and $y_i $ are $2$--valent.  By our assumption, all of the $x_ i $ and $y_ i $ have the same type. 
But this means that the $(x_i,y_i)$ are all $2$--valent and of the same type.
This means, in turn, that the stars of all the $x_i $ and $y_i $ get identified in $\T$.

This contradicts the fact that $z$ was a branch vertex.
\end{proof}

\begin{corollary}\label{nospecials} There are no special vertices in $\T$.
\end{corollary}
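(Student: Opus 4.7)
The plan is to assume $z$ is a special vertex of $\T$ and derive a contradiction using Lemma \ref{notallsame}. The lemma produces preimages $a,b \in \GH \sqcup \GK$ of $z$ having different types, and by definition of $\mathfrak{R}$ there is a sequence $\{(x_i,y_i)\}_{i=1}^n$ in $\GHK$ joining them, with $a$ a coordinate of $(x_1,y_1)$ and $b$ a coordinate of $(x_n,y_n)$. Since $z$ is special, every preimage of $z$ in $\GH\sqcup\GK$ is $2$-valent, so in particular each $x_i$ and each $y_i$ is $2$-valent. I will argue that all of the $x_i$ and $y_i$ must in fact share one common $2$-valent type, so that $a$ and $b$ have the same type, contradicting the lemma.

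The heart of the argument is the local claim that for each $i$ the vertices $x_i \in \GH$ and $y_i \in \GK$ have the same type. View $(x_i,y_i)$ as a vertex of $\g = \GH \times \GK$: its valence there counts pairs of edges incident at $x_i$ and at $y_i$ that share a label and direction. Because $\GH$ and $\GK$ are properly labeled, any given (label, direction) appears at most once at a vertex, so the valence of $(x_i,y_i)$ in $\g$ is at most the smaller of $\val(x_i)$ and $\val(y_i)$, which is $2$. On the other hand $(x_i,y_i)$ lies in $\GHK$, and by our standing normalization $\GHK$ has no extremal vertices, so $(x_i,y_i)$ has $\GHK$-valence, hence $\g$-valence, at least $2$. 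Thus the $\g$-valence is exactly $2$, and this forces the two (label, direction) pairs at $x_i$ to coincide with those at $y_i$, i.e., $x_i$ and $y_i$ have the same type.

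Since consecutive pairs $(x_i,y_i)$ and $(x_{i+1},y_{i+1})$ share a coordinate, this common type propagates along the sequence: there is a single $2$-valent type $T$ such that every $x_i$ and every $y_i$ has type $T$. In particular $a$ and $b$ have type $T$, contradicting the choice guaranteed by Lemma \ref{notallsame}. The main (but small) obstacle is simply making sure that the no-extremal-vertex normalization on $\GHK$ is correctly invoked at each step of the sequence; once that is in place, the rest is a brief type-counting argument.
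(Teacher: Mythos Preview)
Your argument is correct and is essentially the paper's own proof: both invoke Lemma~\ref{notallsame}, pass to a sequence in $\GHK$, and use that a point of $\GHK$ with $2$--valent coordinates of distinct types would be extremal. The only difference is cosmetic---the paper locates a single term $v_j$ where the coordinate types first disagree and calls it extremal, while you prove directly that every $(x_i,y_i)$ has same-type coordinates and propagate along the chain; these are contrapositives of one another.
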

\begin{proof} Let $z$ be a special vertex. 

By Lemma \ref{notallsame}, there are vertices $a$ and $b$ of types $A$ and $B \neq A$ that map to $z$.

Let $\{v_i\}_{i=1}^n$ be a sequence for $a$ and $b$.  
The vertex $v_1$ has type $A$, and $v_n$ has type $B$.
Somewhere in between, the types must switch, and by the definition of sequence, we find a $v_j$ with a coordinate of type $A$, and a coordinate of type $X \neq A$.
This implies that $v_j$ is extremal.
But $\GHK$ has no extremal vertices.
\end{proof}


\begin{lemma}\label{staridentification} Let $z$ be a branch vertex in $\T$. Let $\G^z$ be the subgraph of $\T$ obtained by taking the union of the images of the stars of all branch vertices in $\GH \sqcup \GK$ mapping to $z$. 
If $\val_{\G^z}(z)$ is the valence of $z$ in $\G^z$, then
\[
\val_{\G^z}(z) \leq 2 + \#\big\{ [\st(b)]_\mathfrak{R}\ \big | \ b \in \GH \sqcup \GK  \mathrm{,}\ \mathrm{valence}(b)=3   \mathrm{,\ and}\ b \mapsto z
\big\}.
\]
\end{lemma}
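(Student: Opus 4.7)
After the normalization by $\varphi$, every $3$-valent branch vertex of $\GH$ and $\GK$ has the same type $T$, which occupies a fixed triple of the four possible slots $s_1, s_2, s_3$ (a slot being a label/orientation pair). For each $3$-valent branch $b \in \GH \sqcup \GK$ with $b \mapsto z$ I would record its star as a triple $(e_1^b, e_2^b, e_3^b)$ of edges of $\T$, one in each slot $s_j$ at $z$; two such branches have the same class $[\st(\cdot)]_\mathfrak{R}$ precisely when the triples coincide. Writing $E_j$ for the set of distinct slot-$s_j$ edges that appear in some triple and $N$ for the number of distinct triples, $\val_{\G^z}(z) = |E_1|+|E_2|+|E_3|$, and the lemma reduces to $|E_1|+|E_2|+|E_3| \le N+2$.

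I would then introduce an auxiliary graph $\Omega$ whose vertices are the star-classes and whose edges connect pairs of classes whose triples agree in at least two of the three slots. If $\Omega$ is connected, take a spanning tree and enumerate the classes $c_1, \ldots, c_N$ so that each $c_i$ ($i \ge 2$) is $\Omega$-adjacent to some earlier $c_{j(i)}$. The initial class $c_1$ contributes three edges to $\val_{\G^z}(z)$; each subsequent $c_i$ shares two of its three slot-edges with $c_{j(i)}$, so contributes at most one new edge. Telescoping yields $\val_{\G^z}(z) \le 3 + (N-1) = N+2$.

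The main work is the connectivity of $\Omega$. Given two branches $b, b'$ mapping to $z$, the equivalence $b \sim_\mathfrak{R} b'$ yields a defining sequence $\{(x_i, y_i)\}$ in $V(\GHK)$, which reads as a walk in the bipartite graph on the preimage of $z$ in $\GH \sqcup \GK$ with edges the pairs $(x, y) \in V(\GHK)$. Since $\GHK$ has no extremal vertices, every such pair has valence at least $2$ in $\GHK$, and hence $|\mathrm{slots}(x) \cap \mathrm{slots}(y)| \ge 2$. Because two $2$-element slot sets sharing two elements are equal, and a $2$-element slot set sharing two with $T$ lies inside $T$, the $2$-valent vertices appearing in the walk organize into maximal stretches each carrying a single type $X \subseteq T$; at any pairing $(x, y)$ inside such a stretch the valence of $(x,y)$ in $\g$ is exactly $2$, which forces its valence in $\GHK$ to equal $2$ and so puts both slot-$X$ edges at $x$ and $y$ inside $\GHK$. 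These edges are therefore $\mathfrak R$-identified in $\T$. Reading along the walk, the two slot-$X$ edges at the branches bookending any such stretch get identified, so those branches lie in classes agreeing in at least two slots; and two branches adjacent in the walk (no intermediate $2$-valent vertex) are paired directly in $V(\GHK)$ with valence $3$ in $\g$ and valence at least $2$ in $\GHK$, so their triples likewise agree in at least two slots. Consecutive branches in the walk thus lie in $\Omega$-adjacent (or equal) classes, and $\Omega$ is connected.

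The main obstacle is the third paragraph: keeping track of which slot-edges genuinely get $\mathfrak R$-identified through a non-branch stretch of the walk, and verifying using the no-extremal-vertex property of $\GHK$ together with the four-slot combinatorial constraints that enough edges of $\GHK$ exist to carry out these identifications. Once this is established, the spanning-tree count finishes the proof, and one notes that the degenerate case $N=0$ is vacuous since then $\G^z$ is empty.
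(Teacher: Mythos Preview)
Your approach is correct and is essentially the paper's argument in different packaging: the paper proves the bound by direct induction on the number of star--classes, at each step choosing a shortest $\mathfrak{R}$--sequence between the already--processed classes and the remaining ones---which is exactly building a spanning tree of your graph $\Omega$ one edge at a time. The paper's key observation, that in such a shortest sequence all intermediate coordinates are $2$--valent of a single type (hence the two bookending stars share two edges in $\T$ and the new class adds at most one to the valence), is precisely your slot analysis in the third paragraph.
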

\begin{proof} Let 
\[
n = \#\big\{ [\st(b)]_\mathfrak{R}\ \big | \ b \in \GH \sqcup \GK   \mathrm{,}\ \mathrm{valence}(b)=3   \mathrm{,\ and}\ b \mapsto z
\big\}
\]
and let $b_1, \ldots, b_n$ be a set of branch vertices whose stars form a set of representatives for the set
$
\big\{ [\st(b)]_\mathfrak{R}\ \big | \ b \in \GH \sqcup \GK  \mathrm{,}\ \mathrm{valence}(b)=3   \mathrm{,\ and}\ b \mapsto z
\big\}.
$

For $1\leq j \leq n$, let $\G_j$ be the union of the images in $\T$ of the stars of $b_1, \ldots, b_j$. So, $\G_n = \G^z$.

Since the restriction of $\GH \sqcup \GK \to \T$ to any edge is an embedding, the valence of $z$ in $\G_1$ is $2 + 1 = 3$.

Now let $m \geq 1$ and assume that  the valence of $z$ in $\G_{m-1}$ is 
less than or equal to 
$2 + m-1$. 

After rechoosing our representatives and reordering the vertices $b_1, \ldots, b_{m-1}$, as well as the $b_m, \ldots, b_n$, we may assume that there is a sequence $\{v_i\}_{i =1}^{\ell}$ for $b_{m-1}$ and $b_m$ where $b_{m-1}$ and $b_m$ are the only branch vertices appearing as coordinates in the sequence and each appears only \textit{once}.  
To see this, take $\{v_ i\}$ to be a sequence shortest among all sequences between vertices $a$ and $b$ such that $\st(a)$ is identified with the star of one of $b_1, \ldots, b_{m-1}$ and $\st(b)$ is identified with the star of one of $b_m, \ldots, b_{n}$.

Now, all of the $v_i$ are $2$--valent and of the same type.
It is now easy to see that $z$ is at most $4$--valent in the image of $\st(b_{m-1}) \cup \st(b_m)$ in $\G_m$---again we are using the fact that each edge of $\GH \sqcup \GK$ embeds in $\T$.
This means that $z$ is at most $(2 + m)$--valent in $\G_m$, and we are done by induction.
\end{proof}

\begin{proof}[Proof of Theorem \ref{eulertheorem}] 
Suppose that there is a $2$--valent vertex $a$ in $\GH \sqcup \GK$ carried to a branch vertex in $\T$ whose star is not carried into the star of any branch vertex.

Let $\{v_i\}_{i=1}^n$ be a sequence for $a$ and a branch vertex $x$ that is minimal among all sequences for $a$ and branch vertices---such a sequence exists by Corollary \ref{nospecials}.
So $x$ is the only branch vertex that appears as a coordinate in the sequence and it only appears once, in $v_n$.

Let $A$ be the type of $a$. 
If there were a $2$--valent vertex of type $B \neq A$ appearing as a coordinate in the sequence, then there would be a term in the sequence with a coordinate of type $A$ and a $2$--valent coordinate of type $X \neq A$, making this term in the sequence extremal, which is impossible.
So every $2$--valent coordinate in the sequence is of type $A$.
It follows that the stars of all of the $2$--valent coordinates in the sequence are identified in $\T$.

But $v_n$ is a $2$--valent vertex of $\GHK$, as only one of its coordinates is a branch vertex. So the star of the $2$--valent coordinate of $v_n$ is carried into the image of the star of $x$.
We conclude that the star of $a$ is carried into the image of the star of $x$, a contradiction.

It follows from this and Corollary \ref{nospecials} that for each branch vertex $z$ in $\T$, we have
\[
\val_\T(z)  = \val_{\G^z}(z).
\]
So, by Lemma \ref{staridentification}, we have
\[
\val_\T(z) \leq 2 + \#\big\{ [\st(b)]_\mathfrak{R}\ \big | \ b \in \GH \sqcup \GK  \mathrm{,}\ \mathrm{valence}(b)=3   \mathrm{,\ and}\ b \mapsto z
\big\}.
\]
We conclude that
\begin{align*}
-\chi(\T) & = \frac{1}{2} \sum_{z\ \mathrm{vertex}} (\val_\T(z) - 2) \\
& \leq \frac{1}{2} \# \big\{ [\st(b)]_\mathfrak{R}\ \big | \ b \in \GH \sqcup \GK \  \mathrm{and}\ \mathrm{valence}(b)=3 \big\}. \qedhere
\end{align*}
\end{proof}

\subsection{Matrices}

Let $X =\{x_1, \ldots, x_{2h-2}\}$ and $Y=\{y_1, \ldots, y_{2k-2}\}$ be the sets of branch vertices of $\GH$ and $\GK$, respectively.
Define a function $f \co X \times Y \to \{0,1\}$ by declaring $f(x_i,y_j) =1$ if $(x_i,y_j)$ is a branch vertex of $\GHK$, zero if not.

Consider the $(2h-2) \times (2k-2)$--matrix $M = \big(f(x_i,y_j)\big)$.
Note that $\sum_{i,j} f(x_i,y_j) = \mathbf{b}(\GHK)$.
In particular, H. Neumann's inequality \cite{hanna,hannaaddendum}
\[
\rank(H\cap K) - 1 \leq 2(h - 1)(k - 1)
\]
becomes the simple statement that the entry--sum of $M$ is no more than $(2h-2)(2k-2)$.

\begin{lemma}\label{normalformlemma}
After permuting its rows and columns, we may assume that $M$ is in the block form
\begin{equation}\label{normalform}
\left( \begin{array}{cccccc}
M_1 &    &  &\\
&  \ddots & &\\
&& M_\ell &\\
&&  & \mbox{\large $0$}_{p\times q}
\end{array} \right)
\end{equation}
where every row and every column of every $M_i$ has a nonzero entry and
\[
\ell + p + q = \# \big\{ [\st(b)]_\mathfrak{R}\ \big | \ b \in \GH \sqcup \GK \ \ \mathrm{and}\ \mathrm{valence}(b)=3 \big\}. 
\] 
\end{lemma}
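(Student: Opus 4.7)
I would view $M$ as the biadjacency matrix of a bipartite graph $\mathcal{B}$ with vertex classes $X$ and $Y$, having an edge $\{x_i,y_j\}$ precisely when $f(x_i,y_j)=1$. The block form of $M$ then follows from a standard permutation argument: reorder the rows and columns so that the vertices of each connected component of $\mathcal{B}$ appear consecutively, putting all components with at least one edge first, then the isolated vertices of $X$ (contributing zero rows), and finally the isolated vertices of $Y$ (contributing zero columns). The blocks $M_1,\ldots,M_\ell$ record the $\ell$ nontrivial components, and since each vertex in a nontrivial component has at least one incident edge within it, no $M_i$ has a zero row or column. Hence $\ell+p+q$ is exactly the number of connected components of $\mathcal{B}$.

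The crux of the lemma is to identify this count with $\#\bigl\{[\st(b)]_\mathfrak{R}:b\in\GH\sqcup\GK,\ \mathrm{valence}(b)=3\bigr\}$, for which I would exhibit a natural map from components of $\mathcal{B}$ to $\mathfrak{R}$-classes of $3$-valent stars. The forward direction is immediate: if $(x_i,y_j)$ is a branch vertex of $\GHK$, then its star is a $3$-valent star projecting via $\Pi_H$ and $\Pi_K$ to $\st(x_i)$ and $\st(y_j)$, respectively. For each label $\ell$, the $\ell$-edge of $\st((x_i,y_j))$ terminates at some $(x_i',y_j')\in V(\GHK)$, so $x_i'\sim_\mathfrak{R} y_j'$; hence the $\ell$-edges of $\st(x_i)$ and $\st(y_j)$ have the same image in $\T$, and thus $[\st(x_i)]_\mathfrak{R}=[\st(y_j)]_\mathfrak{R}$. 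Transitivity along $\mathcal{B}$-paths shows all branch vertices in a common component share a star class, and surjectivity onto star classes of $3$-valent vertices is an immediate consequence of Corollary \ref{nospecials}, which ensures every branch vertex of $\T$ has a $3$-valent preimage in $\GH\sqcup\GK$.

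The main obstacle is injectivity: two $3$-valent vertices in distinct components of $\mathcal{B}$ must give distinct $\mathfrak{R}$-classes of stars. Suppose $a$ and $b$ lie in different components yet $[\st(a)]_\mathfrak{R}=[\st(b)]_\mathfrak{R}$. Unpacking the star-class equality yields $a\sim_\mathfrak{R} b$ together with $a_\ell\sim_\mathfrak{R} b_\ell$ at every label $\ell$. The delicate step is converting these $\mathfrak{R}$-identifications---which a priori are witnessed by sequences through $2$-valent vertices of $\GHK$---into a sequence of \emph{branch} vertices of $\GHK$ linking $a$ to $b$. I would attack this by exploiting that $2$-valent vertices of $\GHK$ organize into arcs between consecutive branch vertices and share a common type along each arc (in the spirit of Lemma \ref{notallsame} and Corollary \ref{nospecials}), so the pointwise label compatibility forces each such $2$-valent excursion to be shadowed by a genuine branch vertex of $\GHK$ joining $a$'s side to $b$'s side, producing the desired $\mathcal{B}$-path and completing the bijection.
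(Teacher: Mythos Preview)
Your overall strategy---viewing $M$ as the biadjacency matrix of a bipartite graph $\mathcal{B}$ and reading the block form off from its connected components---is correct, and indeed the paper itself mentions exactly this reformulation in Section~7.1.  However, the paper's proof runs in the opposite direction: it first \emph{defines} the row and column groupings using the $\mathfrak{R}$--classes of stars, so that the off-block entries vanish automatically (if $(x_i,y_j)$ is a branch vertex of $\GHK$ then $\st(x_i)$ and $\st(y_j)$ are identified in $\T$), and then only needs to argue that each zero row (respectively column) comes from a branch vertex whose star class is a singleton.  By building the blocks from the classes, the paper gets ``distinct blocks represent distinct classes'' for free.

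Your route forces you to prove the converse: that distinct $\mathcal{B}$--components yield distinct star classes.  This is precisely where your proposal has a genuine gap.  Your sketch---that sequences for $a$ and $b$ through $2$--valent vertices of $\GHK$ can be ``shadowed'' by branch vertices of $\GHK$---is not an argument.  A sequence witnessing $a\sim_{\mathfrak{R}}b$ may pass entirely through vertices $(x_i,y_i)$ in which at most one coordinate is $3$--valent; nothing you have said produces a vertex of $\GHK$ with \emph{both} coordinates $3$--valent, which is what an edge of $\mathcal{B}$ requires.  The appeal to ``arcs of common type'' in the spirit of Corollary~\ref{nospecials} controls the $2$--valent coordinates but does not manufacture the needed $3$--valent pair.  (Incidentally, your invocation of Corollary~\ref{nospecials} for surjectivity is misplaced: surjectivity of the map from $\mathcal{B}$--components to star classes is immediate, since every $3$--valent vertex of $\GH\sqcup\GK$ lies in some component of $\mathcal{B}$.)

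In short: the block form and the forward direction are fine, but the injectivity step---the part you correctly flag as the main obstacle---is left as a promissory note rather than a proof.  The paper avoids this difficulty by organizing the matrix via the star classes from the outset.
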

\noindent When $p$ or $q$ is zero, the notation means that $M$ possesses $q$ zero--columns at the right or $p$ zero--rows at the bottom, respectively. 
\begin{proof}  
Let 
\[
\{e_1,\ldots, e_s\} = \big\{ [\st(b)]_\mathfrak{R}\ \big | \ b \in \GH \sqcup \GK \ \ \mathrm{and}\ \mathrm{valence}(b)=3 \big\},
\]
let $\{r_{i,j}\}_{j=1}^{m_i}$ be the set of rows corresponding to branch vertices in $\GH$ of class $e_i$, and let $\{c_{i,t}\}_{t=1}^{n_i}$ be the set of columns corresponding to branch vertices in $\GK$ of class $e_i$. 

By permuting the rows we may assume that the $r_{1,j}$ are the first $m_1$ rows, the $r_{2,j}$ the next $m_2$ rows, and so on.
Now, by permuting columns, we may assume that the $c_{1,k}$ are the first $n_1$ columns, the $r_{2,k}$ the next $n_2$ columns, and so forth.
Moving all of the zero--rows to the bottom, and all of the zero--columns to the right, we obtain our normal form \eqref{normalform}.

To see that the stated equality holds, first notice that the normal form and the definition of the equivalence relation $\mathfrak{R}$ together imply that: there are precisely $p$ branch vertices in $\GH$ whose stars are not $\mathfrak{R}$--equivalent to that of \textit{any} other branch vertex in $\GH \sqcup \GK$, corresponding to the the $p$ zero--rows at the bottom; and there are precisely $q$ branch vertices in $\GK$ whose stars are not $\mathfrak{R}$--equivalent to that of \textit{any} other branch vertex in $\GH \sqcup \GK$, corresponding to the the $q$ zero--columns at the right.
After reordering the $\mathfrak{R}$--equivalence classes, we may thus list them as 
\[
e_1, \ldots, e_L; \ e_{L + 1}, \ldots, e_{L + p}; \ e_{L + p + 1}, \ldots, e_{L + p + q}
\]
where $e_{L + 1}, \ldots, e_{L + p}$ are the classes of the branches corresponding to the last $p$ rows, and $e_{L + p + 1}, \ldots, e_{L + p + q}$ are the classes corresponding to the last $q$ columns.

By construction of the normal form $M$, each block represents an $\mathfrak{R}$--equivalence class of stars: if the entries $(a,b)$ and $(c,d)$ of $M$ lie in a block $M_i$, then the vertices $x_a$, $y_b$, $x_c$, and $y_d$ all represent the same $\mathfrak{R}$--equivalence class. 
Furthermore, distinct blocks represent distinct classes.
So the number $L$ is at least $\ell$.

Finally, as an equivalence class either corresponds to a block (the equivalence class has representatives in $\GH$ \textit{and} $\GK$), a zero--row (the equivalence class has a unique representative in $\GH$), or a zero--column (the equivalence class has a unique representative in $\GK$), we conclude that $L = \ell$.  
\end{proof}

We will make repeated use of the following lemma.

\begin{lemma}\label{entrysum}
The entry--sum of $M$ is less than or equal to the entry--sum of the $(2h-2) \times (2k-2)$--matrix
\[
\left( \begin{array}{ccc}
\mbox{\large $1$}_{m\times n} &    &  \\
& \diagmatrix & \\
 & &\! \! \! \! \mbox{\large $0$}_{p\times q}
\end{array} \right)
\]
where $m = 2h-2 - p - (\ell -1)$, $n = 2k-2 - q - (\ell-1)$, and $\mbox{\large $1$}_{m\times n}$ is the $m \times n$--matrix all of whose entries are $1$.
\end{lemma}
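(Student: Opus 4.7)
The plan is to pass from the bound on the entry--sum of $M$ to a purely numerical inequality about the block sizes, which can then be verified by a direct algebraic manipulation.

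Write $M_i$ as an $a_i \times b_i$ block, so that $a_i, b_i \geq 1$ (since every row and every column of $M_i$ contains a nonzero entry, the block is nonempty in both dimensions). The block form of $M$ gives $\sum_i a_i + p = 2h - 2$ and $\sum_i b_i + q = 2k - 2$, hence
\[
\sum_{i=1}^\ell a_i = m + (\ell - 1), \qquad \sum_{i=1}^\ell b_i = n + (\ell - 1).
\]
Since each $M_i$ has entries in $\{0,1\}$, the entry--sum of $M$ is at most $\sum_i a_i b_i$, while the entry--sum of the target matrix is $mn + (\ell - 1)$. So the lemma will be reduced to proving
\[
\sum_{i=1}^\ell a_i b_i \ \leq \ mn + (\ell - 1).
\]

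To verify this, I would substitute $\tilde a_i = a_i - 1 \geq 0$ and $\tilde b_i = b_i - 1 \geq 0$. Then $m = \sum_i \tilde a_i + 1$ and $n = \sum_i \tilde b_i + 1$, and a short expansion yields
\[
\sum_i a_i b_i = \sum_i \tilde a_i \tilde b_i + \sum_i \tilde a_i + \sum_i \tilde b_i + \ell,
\]
\[
mn + (\ell - 1) = \sum_i \tilde a_i \cdot \sum_j \tilde b_j + \sum_i \tilde a_i + \sum_j \tilde b_j + \ell.
\]
Subtracting gives the clean identity
\[
mn + (\ell - 1) - \sum_{i=1}^\ell a_i b_i \ =\ \sum_{i \neq j} \tilde a_i \tilde b_j \ =\ \sum_{i \neq j} (a_i - 1)(b_j - 1) \ \geq \ 0,
\]
because every factor is nonnegative.

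There is no serious obstacle: the only mild subtlety is that the bookkeeping needs $a_i, b_i \geq 1$, which follows from the hypothesis that each row and column of each $M_i$ contains a nonzero entry. Once that is in place the computation above finishes the proof, and geometrically it says the upper bound is saturated when one block absorbs nearly all of the rows and columns and the remaining $\ell - 1$ blocks shrink to single $1$'s on the diagonal—precisely the comparison matrix in the statement.
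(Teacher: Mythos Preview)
Your proof is correct. The reduction to $\sum_i a_i b_i \le mn + (\ell-1)$ is sound (the blocks partition the nonzero rows and columns, giving $\sum a_i = m+\ell-1$ and $\sum b_i = n+\ell-1$), and the substitution $\tilde a_i=a_i-1$, $\tilde b_i=b_i-1$ turns the inequality into the nonnegativity of $\sum_{i\neq j}\tilde a_i\tilde b_j$, which is immediate. The only hypothesis you need, $a_i,b_i\ge 1$, is exactly what the normal form in Lemma~\ref{normalformlemma} provides.

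The route, however, is genuinely different from the paper's. The paper argues by an iterative procedure: first fill each block $M_i$ with $1$'s, then repeatedly absorb the last non--$1\times 1$ block $M_t$ (of size $c\times d$) into its predecessor $M_{t-1}$ (of size $a\times b$), replacing the pair by an $(a+c-1)\times(b+d-1)$ block and a single $1\times1$ block. Each merge is shown (pictorially, via Figure~\ref{countingfigure}) not to decrease the entry--sum, and the process terminates in the displayed comparison matrix. Your argument collapses this induction into a single algebraic identity; it is shorter and does not require the figure or the ordering of blocks by entry--sum. What the paper's approach buys is a concrete constructive picture of how the extremal configuration is reached by moves, which matches the visual style used later (Figures~\ref{BurnsCount} and \ref{StrongCount}); what your approach buys is a one--line verification that makes the equality case $\sum_{i\neq j}(a_i-1)(b_j-1)=0$ transparent.
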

\begin{proof} We perform a sequence of operations to $M$ that do not decrease the entry--sum and which result in the matrix displayed in the lemma.

First replace each block in $M$ with a block of the same dimensions and whose entries are all $1$.
Of course, this does not decrease the entry--sum.

Now, reorder the blocks in order of nonincreasing entry--sum.
If there are only $1 \times 1$--blocks, we are done.
If all but one of the blocks are $1 \times 1$, we are again done.
So we may assume that at least two blocks have more than one entry.

Let $M_t$ be the last block with more than one entry.
Say that $M_{t -1}$ and $M_t$ are $a \times b$ and $c \times d$ matrices, respectively. 
We now replace $M_{t - 1}$ with an $(a + c - 1) \times (b + d -1)$--block all of whose entries are $1$, and replace $M_t$ with a $1 \times 1$--block whose entry is $1$.
That this does not decrease the entry--sum is best understood using a diagram, which we have provided in Figure \ref{countingfigure}.
\begin{figure} 
\begin{center}
\input{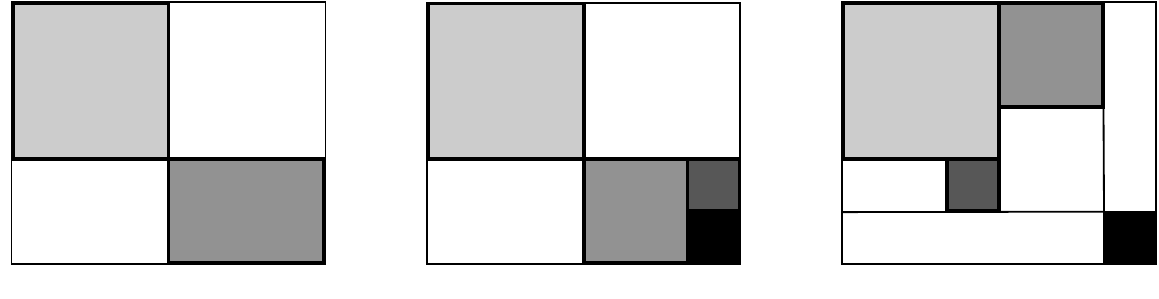tex_t}
\end{center}
\caption{At left are the blocks $M_{t -1}$ and $M_t$.
The shaded regions represent nonzero entries.  
By the ordering of the blocks, $cd \leq ab$, and so we may assume without loss of generality that $c \leq a$.  
At right we have cut and rearranged these regions to demonstrate that the entry--sum is not decreased by our move.}
\label{countingfigure}
\end{figure}

Repeating this procedure eventually terminates in the matrix displayed in the lemma, and the proof is complete.
\end{proof}

\section{Burns' theorem}\label{burnssection}

We record here a proof of Burns' theorem that requires only the matrix $M$ and a simple count---we recommend B. Servatius' \cite{servatius} and P. Nickolas' \cite{nickolas} proof of this theorem, which involves a clever consideration of a minimal counterexample, as do we recommend the discussion of said argument in \cite{walter}.
To our knowledge, the argument here is new.

\begin{lemma}\label{noPandQ} If $p = q = 0$, then $\ell > 1$.
\end{lemma}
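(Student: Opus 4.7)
The plan is to reason by contradiction, assuming $\ell = 1$. Combined with the hypothesis $p = q = 0$, this says that $\ell + p + q = 1$, so there is exactly one $\mathfrak{R}$-equivalence class of stars of $3$-valent vertices in $\GH \sqcup \GK$.

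First I will feed this count into Theorem \ref{eulertheorem} to obtain
\[
-\chi(\T) \ \leq \ \tfrac{1}{2},
\]
and then use integrality of $\chi(\T)$ to tighten this to $\chi(\T) \geq 0$. Combining with the bound $-\chi(\T) \geq \rank(H \vee K) - 1$ already noted in the paper (a consequence of the fact that the canonical map $\T \to \GHvK$ is a composition of folds, hence surjective on $\pi_1$) yields $\rank(H \vee K) \leq 1$.

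Next I will derive the contradiction from the block structure provided by Lemma \ref{normalformlemma}. Under the assumption $\ell = 1$, $p = q = 0$, the entire matrix $M$ consists of the single block $M_1$, every row and column of which is nonzero. Hence $M_1$ (and therefore $M$) has at least one row and one column, forcing $2h - 2 \geq 1$ and $2k - 2 \geq 1$, so $h, k \geq 2$. Since $H$ is a rank-$h$ free subgroup of the free group $H \vee K$, we conclude $\rank(H \vee K) \geq h \geq 2$, contradicting $\rank(H \vee K) \leq 1$.

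There is essentially no real obstacle here: every step is a direct invocation of earlier machinery (Theorem \ref{eulertheorem}, Lemma \ref{normalformlemma}, and the fold-based inequality $-\chi(\T) \geq \rank(H \vee K) - 1$). The only subtle point is that Theorem \ref{eulertheorem} delivers only the half-integer bound $-\chi(\T) \leq 1/2$, which must be sharpened to $\chi(\T) \geq 0$ via integrality of the Euler characteristic before the rank inequality bites.
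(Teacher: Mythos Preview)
Your argument is essentially correct, but it contains one misstatement and takes a considerably longer route than the paper.

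The misstatement is in your last step: the implication ``$H$ is a rank-$h$ free subgroup of the free group $H\vee K$, hence $\rank(H\vee K)\geq h$'' is false in general---a free group of rank~$2$ contains free subgroups of every finite rank, so rank is not monotone under inclusion of free groups. What you actually need is that $\rank(H\vee K)\leq 1$ forces $H\vee K$ to be cyclic, whence every subgroup (in particular $H$) is cyclic and $h\leq 1$; this then contradicts the $h\geq 2$ you extracted from the block form. With that correction the argument goes through.

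The paper's proof is a one-line parity observation. From $\ell+p+q=1$ one sees (via Corollary~\ref{nospecials} and Lemma~\ref{staridentification}, or the argument establishing Theorem~\ref{eulertheorem}) that $\T$ has exactly one branch vertex and it is $3$-valent. Since $\T$ inherits from $\GH\sqcup\GK$ the property of having no extremal vertices, every other vertex of $\T$ is $2$-valent, and the sum of all valences is odd---impossible in any graph. Your bound $-\chi(\T)\leq\tfrac12$ from Theorem~\ref{eulertheorem} is precisely the Euler-characteristic shadow of this parity obstruction, but rather than stopping there you pass through integrality, the fold inequality for $\rank(H\vee K)$, and the block structure of $M$ to reach a contradiction that was already available at the graph level.
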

\begin{proof} If $p=q=0$ and $\ell =1$, then $\T$ has a single branch vertex of valence $3$, which is impossible, as it has no extremal vertices.
\end{proof}

\begin{theorem}[Burns] Let $H$ and $K$ be nontrivial finitely generated subgroups of a free group with ranks $h$ and $k$, respectively.
Then
\[
\rank(H\cap K) - 1 \leq 2(h - 1)(k - 1) - \min\big\{(h - 1), (k - 1)\big\}.
\]
\end{theorem}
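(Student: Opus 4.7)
The plan is to recast the theorem as an upper bound on the entry--sum of the matrix $M$ introduced in Section~3. Under the normalizations made there, $\GHK$ is $3$--regular with no extremal vertices, so $\mathbf{b}(\GHK) = 2\big(\rank(H\cap K) - 1\big)$, and this is precisely the entry--sum of $M$. Since the case $H \cap K = 1$ is vacuous, I retain the standing assumption $H \cap K \neq 1$ and arrange, without loss of generality, that $h \leq k$. Writing $a = h-1$ and $b = k-1$, the theorem reduces to showing that the entry--sum of $M$ is at most $4ab - 2a$.

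Lemma \ref{entrysum} provides the bound $(2a-p-s)(2b-q-s) + s$ on the entry--sum, with $s = \ell - 1 \geq 0$. A short expansion yields the identity
\[
(2a-p-s)(2b-q-s) + s \;=\; (2a-p)(2b-q) - s\big((2a-p) + (2b-q) - s - 1\big),
\]
and the subtracted term is nonnegative since Lemma \ref{normalformlemma} forces $(2a-p) - s \geq 1$ and $(2b-q) - s \geq 1$. Consequently the entry--sum of $M$ is at most $(2a-p)(2b-q)$ in all cases.

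The proof then splits according to whether $p$ or $q$ is positive. If $p \geq 1$, then $(2a-p)(2b-q) \leq (2a-1)(2b) = 4ab - 2b \leq 4ab - 2a$ using $a \leq b$. If instead $q \geq 1$, then $(2a-p)(2b-q) \leq (2a)(2b-1) = 4ab - 2a$. The only remaining case, $p = q = 0$, is where Lemma \ref{noPandQ} becomes essential: it forces $\ell \geq 2$, and hence $s \geq 1$. In this regime I return to the sharper form $(2a-s)(2b-s) + s = 4ab - g(s)$, where $g(s) = s(2a + 2b - s - 1)$ is concave, so its minimum on the interval $s \in [1, 2a-1]$ is attained at an endpoint. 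A direct check gives $g(1) = 2a + 2b - 2 \geq 2a$ (as $b \geq 1$) and $g(2a-1) = (2a-1)(2b) = 4ab - 2b \geq 2a$ (equivalent to the elementary inequality $2ab \geq a+b$ for positive integers). Dividing the resulting inequality by $2$ yields the theorem; the only genuinely delicate case is $p=q=0$, and Lemma \ref{noPandQ} is tailor-made for it.
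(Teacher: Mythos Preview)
Your argument is correct and follows essentially the same route as the paper: both reduce to bounding the entry--sum of $M$ via Lemma~\ref{entrysum}, split into the cases where some zero--row/column exists versus $p=q=0$, and invoke Lemma~\ref{noPandQ} to force $\ell\geq 2$ in the latter. The only differences are cosmetic---you package the $p=q=0$ estimate as a concavity argument for $g(s)$ while the paper expands directly (see Figure~\ref{BurnsCount}), and you should note that the side cases $\rank(H\cap K)\leq 1$ (where $\ell=0$ and your ``$s\geq 0$'' fails) and $h=1$ are disposed of trivially.
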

\begin{proof} Let $h$ and $k$ be the ranks of $H$ and $K$ with $h \leq k$.

If one of $p$ or $q$ is nonzero, then $M$ has a zero--row or a zero--column, by \eqref{normalform}. 
Since $M$ is a $(2h-2) \times (2k-2)$--matrix with entries in $\{0,1\}$ and entry--sum $\mathbf{b}(\GHK) = -2\chi(\GHK)$, we are done. 

So, by Lemma \ref{noPandQ}, we may assume that $\ell \geq 2$, and the  comparison of entry--sums in Lemma \ref{entrysum} yields
{\small \begin{align*}
\mathbf{b}(\GHK) & \leq  \ell -1 + \big(2h-2 - (\ell-1)\big)\big(2k-2 - (\ell-1)\big)\\ 
& = \ell - 1 + (2h-2)(2k-2) - (\ell-1)(2h-2) - (\ell-1)(2k-2) + (\ell-1)^2 \\
&\leq 4(h - 1)(k - 1)  - (2h-2) + \big(\ell - (2k-2)\big)(\ell-1)\\
& \leq 4(h-1)(k-1) - (2h-2),
\end{align*}
}as desired---the inequality is again more easily understood using a diagram, which we have provided in Figure \ref{BurnsCount}.
\end{proof}
\begin{figure} 
\input{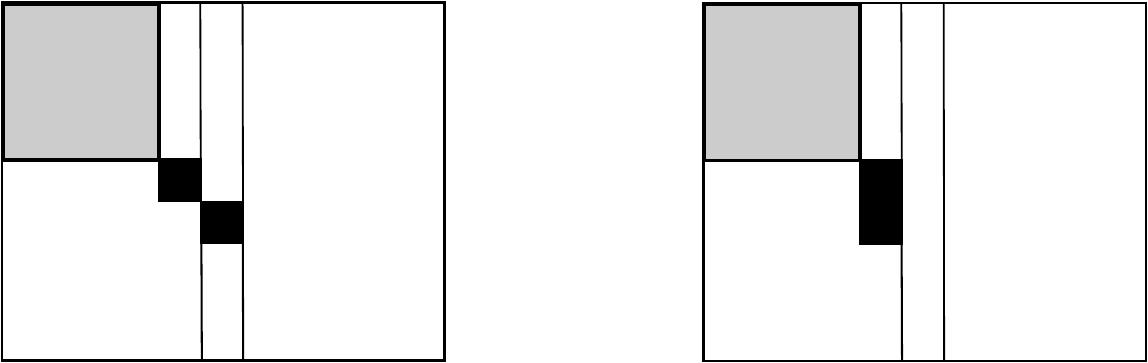tex_t}
\caption{At left is a graphical representation of the matrix in Lemma \ref{entrysum}.  The shaded regions represent nonzero entries. When $\ell \geq 2$, we may rearrange these regions as shown at the right, establishing the desired inequality.}
\label{BurnsCount}
\end{figure}

\noindent Notice that nothing prevents us from considering pushouts along disconnected graphs, and so we in fact obtain W. Neumann's \cite{walter} strong form of Burns' inequality:
\[
\sum_{\overset{g\in H \backslash F \slash K}{H \cap K^g \neq 1}} \! \! \big( \rank(H\cap K^g) - 1\big) 
\leq 2(h - 1)(k - 1) - \min\big\{(h - 1), (k - 1)\big\}.
\]

\section{Strengthening Burns' inequality}

\begin{theorem}\label{joinburns} Let $H$ and $K$ be nontrivial finitely generated subgroups of $F$ of ranks $h$ and $k \geq h$ that intersect nontrivially.  Then
\[
\rank(H\cap K) -1 \leq 2(h -1)(k-1) - (h -1)\big(\rank(H\vee K) - 1\big).
\]
\end{theorem}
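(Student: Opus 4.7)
The plan is to combine the entry-sum bound from Lemma~\ref{entrysum}, the Euler-characteristic bound from Theorem~\ref{eulertheorem}, and the fold inequality $\chi(\T)\leq\chi(\GHvK)$. Writing $\alpha=h-1$, $\beta=k-1$, and $r=\rank(H\vee K)-1$, the theorem becomes $\mathbf{b}(\GHK)\leq 4\alpha\beta-2\alpha r$. From Lemma~\ref{entrysum}, $\mathbf{b}(\GHK)\leq mn+(\ell-1)$ with $m=2\alpha-p-(\ell-1)$ and $n=2\beta-q-(\ell-1)$; and from Theorem~\ref{eulertheorem} together with $-\chi(\T)\geq r$, $\ell+p+q\geq 2r$.

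The hypothesis $H\cap K\neq 1$ is used only to bound $r$: the natural surjection $H\ast K\twoheadrightarrow H\vee K$ has nontrivial kernel, so the Hopfian property of free groups forces $\rank(H\vee K)\leq h+k-1$, i.e.\ $r\leq\alpha+\beta$, and therefore $r\leq 2\beta$ since $\alpha\leq\beta$. This is precisely the inequality that rules out the counterexamples noted in the introduction. When $\ell=0$ the matrix $M$ is the zero matrix, so $\mathbf{b}(\GHK)=0$ and the conclusion reduces to $r\leq 2\beta$.

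Assume $\ell\geq 1$ and set $u=\ell-1$, $s=p+u$, $t=q+u$, so that $m=2\alpha-s$ and $n=2\beta-t$. Because each of the $\ell$ blocks of $M$ spans at least one row and one column, $s\leq 2\alpha-1$ and $t\leq 2\beta-1$; the Euler bound becomes $s+t\geq 2r+u-1$. A direct expansion shows that $mn+u\leq 4\alpha\beta-2\alpha r$ is equivalent to
\[
s(2\beta-t)\ \geq\ 2\alpha(r-t)+u.
\]
Split on the sign of $r-t$. When $t\geq r$, the right side is at most $u$, while $s\geq u$ and $2\beta-t\geq 1$ make the left side at least $u$. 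When $t<r$, set $\delta=r-t\geq 1$; using $s\geq 2r+u-1-t$ and absorbing the nonnegative slack $u(2\beta-t-1)$ via $t\leq 2\beta-1$, the problem reduces to the numerical inequality
\[
(r-1)(2\beta-r)\ +\ \delta\bigl(2(\beta-\alpha)-1\bigr)\ +\ \delta^2\ \geq\ 0.
\]
The first term is nonnegative from $1\leq r\leq 2\beta$, and the last two combine as $\delta\bigl(2(\beta-\alpha)+\delta-1\bigr)\geq 0$ since $\beta\geq\alpha$ and $\delta\geq 1$.

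The hard part, such as it is, is this final quadratic verification: it is essential that the bound $r\leq 2\beta$ (which is where $H\cap K\neq 1$ is used) together with $\delta\geq 1$ are exactly strong enough to make the quadratic nonnegative, with the critical case occurring when $\alpha=\beta$. Everything else is routine bookkeeping around the matrix $M$ of Lemma~\ref{normalformlemma}.
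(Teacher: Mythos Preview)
Your argument is correct. The overall strategy matches the paper's---combine Lemma~\ref{entrysum}, Theorem~\ref{eulertheorem} with Lemma~\ref{normalformlemma}, and the fold inequality $-\chi(\T)\geq r$, then split into cases---but the algebraic execution differs in an interesting way. The paper linearizes early: it discards the quadratic term in the entry-sum bound to obtain the two linear estimates
\[
\mathbf{b}(\GHK)\ \leq\ 4\alpha\beta-2\beta s
\qquad\text{and}\qquad
\mathbf{b}(\GHK)\ \leq\ 4\alpha\beta-2\alpha t,
\]
and then splits on whether $q\geq r$ (so the second estimate suffices) or $q<r$. In the latter case it exploits the integrality of $q$ and $-\chi(\T)$ to squeeze out one extra unit, obtaining $\ell+p\geq -\chi(\T)+1\geq r+1$ and hence $s\geq r$, which feeds directly into the first estimate.

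You instead keep the full quadratic $mn+u$, split on $t$ versus $r$ (a slightly finer dichotomy), and in the hard case carry out a quadratic verification using only the coarser bound $s+t\geq 2r+u-1$. This is where you genuinely need the Hopfian consequence $r\leq 2\beta$ to make $(r-1)(2\beta-r)\geq 0$; the paper's route in the main case never invokes that bound, using it only for $\rank(H\cap K)=1$. One small point worth making explicit: you use $r\geq 1$ in the quadratic check, which follows from $\rank(H\vee K)\geq h\geq 2$ once the trivial case $h=1$ is set aside. Both executions are clean; the paper's integer trick avoids the quadratic algebra, while your approach shows that the weaker bound $\ell+p+q\geq 2r$ already suffices without appealing to $-\chi(\T)$ a second time.
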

\begin{proof} 
First suppose that $\rank(H\cap K) = 1$.
The desired inequality is then
\[
0 \leq (h -1)\big(2k - \rank(H\vee K) - 1\big).
\]
If 
\[
2k - \rank(H\vee K) - 1 \geq 0,
\]
then we are done.
If this is not the case, then we must have $\rank(H\vee K) = 2k$, and hence $h=k$, since $\rank(H\vee K) \leq h + k$.
Since finitely generated free groups are Hopfian, we must conclude that $\rank(H\cap K) = 0$, which contradicts our assumption.

So we assume as we may that $\rank(H\cap K) \geq 2$. 
As every branch vertex of $\GHK$ is associated to a block of our normal form $M$, this implies that $\ell \geq 1$.

First note that $2h-2 > p + \ell -1$ and $2k-2 > q + \ell -1$.
By Lemma \ref{entrysum}, we have
\begin{align}
\mathbf{b}(\GHK)  & \leq  \ell - 1 + \big(2h-2 - (p + \ell-1)\big)\big(2k-2 - (q+\ell-1)\big) \notag \\
& = \ell -1 + (2h-2)(2k-2) - (p + \ell-1)(2k-2) \notag \\
& \quad \quad \quad \ + \big[ (p + \ell-1)(q + \ell-1) - (2h-2)(q + \ell-1) \big] \notag \\
& \leq \ell -1 + (2h-2)(2k-2) - (p + \ell-1)(2k-2) \notag \\
& \quad \quad \quad \  - [\ell -1]  \notag \\
& = (2h-2)(2k-2) - (p + \ell-1)(2k-2).\label{estimate1}
\end{align}
The proof of the inequality is illustrated in Figure \ref{StrongCount}.
\begin{figure} 
\input{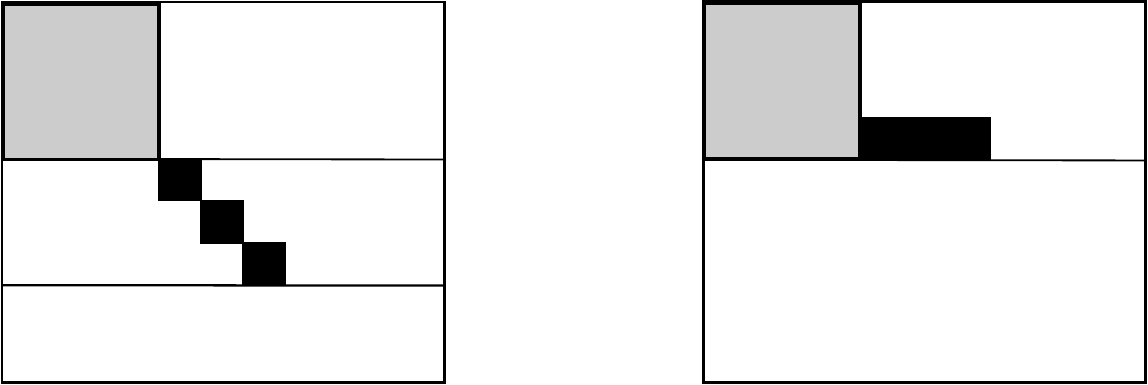tex_t}
\smallskip
\caption{Again we have the matrix of Lemma \ref{entrysum} at the left, shaded regions representing nonzero entries. Provided $\ell \geq 1$, we may rearrange these regions as shown at the right, establishing \eqref{estimate1}.}
\label{StrongCount}
\end{figure}
Similarly,
\begin{equation}\label{estimate2}
\mathbf{b}(\GHK) \leq (2h-2)(2k-2) - (q + \ell-1)(2h-2).
\end{equation}

Since $\ell \geq 1$, the inequality \eqref{estimate2} provides the theorem unless 
\[
q < \rank(H\vee K) - 1 \leq -\chi(\T).
\]
So we assume that $q \leq -\chi(\T) - 1$, the rest of the argument proceeding as in \cite{imrichmuller}.

By Theorem \ref{eulertheorem} and Lemma \ref{normalformlemma}, we also have $\ell + p + q \geq -2\chi(\T)$ and so 
\[
\ell + p \geq -\chi(\T) + 1 \geq \rank(H\vee K).
\]
By \eqref{estimate1}, we now have 
\[
\mathbf{b}(\GHK) \leq (2h-2)(2k-2) - \big(\rank(H\vee K) -1\big)(2k-2),
\]
and since $k \geq h$, the proof is complete.
\end{proof}

\section{A particular case of the Hanna Neumann Conjecture}\label{particular}

\begin{theorem}[Kent, Louder]\label{bigjoin} Let $H$ and $K$ be nontrivial finitely generated subgroups of a free group of ranks $h$ and $k$, respectively.
If 
\[
\rank(H\vee K) - 1 \geq \frac{h+k-1}{2}
\]
then
\[
\rank(H\cap K) - 1 \leq (h-1)(k-1).
\]
\end{theorem}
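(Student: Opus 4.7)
The plan is to leverage the hypothesis on $\rank(H\vee K)$ to force a pigeonhole on the parameters $\ell,p,q$ of the normal form in Lemma \ref{normalformlemma}, then invoke whichever of the estimates \eqref{estimate1} and \eqref{estimate2} from the proof of Theorem \ref{joinburns} is applicable. First one disposes of the trivial case: if $\rank(H\cap K)\leq 1$, then $\rank(H\cap K)-1\leq 0\leq (h-1)(k-1)$ and there is nothing to prove. So one may assume $\rank(H\cap K)\geq 2$, which forces the basepoint component of $\GHK$ to contain a branch vertex, so $\mathbf{b}(\GHK)\geq 1$, and by Lemma \ref{normalformlemma} this implies $\ell\geq 1$.

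Next one assembles the pigeonhole. Theorem \ref{eulertheorem} combined with Lemma \ref{normalformlemma} gives $\ell+p+q\geq -2\chi(\T)$, and since $\T\to\GHvK$ factors as folds (which are surjective on $\pi_1$) one has $-\chi(\T)\geq \rank(H\vee K)-1$. Feeding in the hypothesis $\rank(H\vee K)-1\geq (h+k-1)/2$ yields $\ell+p+q\geq h+k-1$; adding $\ell\geq 1$ produces
\[
(p+\ell)+(q+\ell)\geq h+k,
\]
so by pigeonhole either $p+\ell\geq h$ or $q+\ell\geq k$.

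Finally, one plugs the dichotomy into the estimates from the proof of Theorem \ref{joinburns}. In the first case \eqref{estimate1} gives
\[
\mathbf{b}(\GHK)\leq (2k-2)(2h-1-p-\ell)\leq (2k-2)(h-1)=2(h-1)(k-1);
\]
the second case is symmetric via \eqref{estimate2}. Since $\rank(H\cap K)-1\leq \mathbf{b}(\GHK)/2$, the desired inequality follows. Essentially all the heavy lifting has already been done in Section 3, and the only genuinely new ingredient is the pigeonhole step; the main thing to verify is that the hypothesis is exactly strong enough, which reduces to the arithmetic identity $2\cdot(h+k-1)/2+1=h+k$ that makes the pigeonhole bite. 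I do not anticipate any genuine obstacle beyond this bookkeeping.
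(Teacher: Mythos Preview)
Your proof is correct and follows essentially the same approach as the paper: both derive $\ell+p+q\geq h+k-1$ from Theorem~\ref{eulertheorem}, Lemma~\ref{normalformlemma}, and the hypothesis, then feed this into the entry--sum estimates coming from Lemma~\ref{entrysum}. The only cosmetic difference is that the paper splits into the cases $q\geq k$ versus $q\leq k-1$ (the latter giving $p+\ell-1\geq h-1$ directly), whereas you add $\ell\geq 1$ and pigeonhole symmetrically into $p+\ell\geq h$ or $q+\ell\geq k$; the two arguments are otherwise identical.
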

\begin{proof} 
Note that if $q \geq k$, then the $(2h-2)\times(2k-2)$--matrix $M$ has at least $k$ zero--columns. 
As $\mathbf{b}(\GHK)$ is the entry--sum of $M$, we have
\[
\mathbf{b}(\GHK) \leq (2k-2 - k)(2h-2) = (k-2)(2h-2),
\]
and so
\[
\rank(H\cap K) - 1 \leq (h-1)(k-2),
\]
which is better than desired.

So assume that $q \leq k-1$.
Then, by assumption, Theorem \ref{eulertheorem}, Lemma \ref{normalformlemma}, and the fact that $-\chi(\T) \geq  \rank(H\vee K) - 1$, we have
\begin{equation}\label{hypoineq}
\ell -1 + p + q \geq h + k - 2.
\end{equation}
Note that $2h-2 > p + \ell -1$.
So, by Lemma \ref{entrysum} and \eqref{hypoineq}, we have
\begin{align*}
\mathbf{b}(\GHK)  & \leq  \ell - 1 + \big(2h-2 - (p + \ell-1)\big)\big(2k-2 - (q+\ell-1)\big) \\
& = \ell -1 + (2h-2)(2k-2) - (p + \ell-1)(2k-2)\\
& \quad \quad \quad \ + \big[ (p + \ell-1)(q + \ell-1) - (2h-2)(q + \ell-1) \big] \\
& \leq \ell -1 + (2h-2)(2k-2) - (h+k-2-q)(2k-2)\\
& \quad \quad \quad \  - [\ell -1] \\
& \leq (2h-2)(2k-2) - (h-1)(2k-2)\\
& = 2(h-1)(k-1). \qedhere
\end{align*} 
\end{proof}

\noindent We do not obtain the stronger inequality 
\[
\sum_{\overset{g\in H \backslash F \slash K}{H \cap K^g \neq 1}} \! \! \big( \rank(H\cap K^g) - 1\big) 
\leq (h - 1)(k - 1)
\]
here, nor the analogous inequality in Theorem \ref{joinburns}, as the pushout along a larger graph could have Euler characteristic dramatically smaller in absolute value than $-\chi(\GHvK)$.
For example, it is easy to find $H$ and $K$ and $u$ and $v$ such that the pushouts $\T^{\, uv}$ and $\T$ of $\GH$ and $\GK$ along $\Gamma_{\! H^u \cap K^v}$ and $\GHK$, respectively, satisfy 
$-\chi(\T^{\, uv}) \geq -\chi(\Gamma_{\! H^u \vee K^v}) \gg  -\chi(\T)$.
As a consequence, the pushout along $\GHK \sqcup \Gamma_{\! H^u \cap K^v}$ will have Euler characteristic much smaller in absolute value than $-\chi(\Gamma_{\! H^u \vee K^v})$.

If the reader would like a particular example of this phenomenon, she may produce one as follows.  

Begin with subgroups $A$ and $B$ of large rank so that the topological pushout of $\Gamma_{\! A}$ and $\Gamma_{\! B}$ is the wedge of two circles: take $A$ and $B$ to be of finite index in $F$, the subgroup $A$ containing $a$, the subgroup $B$ containing $b$.

Now consider the endomorphism $F \to F$ that takes $a$ and $b$ to their squares.
Let $H$ and $K$ be the images of $A$ and $B$ under this endomorphism, respectively.
It is a simple exercise to see that the pushout $\T$ of $\GH$ and $\GK$ along $\GHK$ is homeomorphic to that of $\Gamma_{\! A}$ and $\Gamma_{\! B}$ along $\Gamma_{\! A\cap B}$---it is a wedge of two circles labeled $a^2$ and $b^2$.
It is also easy to see that the pullback $\g$ contains an isolated vertex $(x,y)$, where $x$ is the $2$--valent center of a segment labeled $a^2$ and $y$ is the $2$--valent center of a segment labeled $b^2$.

We may conjugate $H$ and $K$ by elements $u$ and $v$ in $F$, respectively, so that $\Gamma_{\! H^u}= \GH$, $\Gamma_{\! K^v} = \GK$, and $\Gamma_{\! H^u \cap K^v}$ is our isolated point.
So the pushout $\T^{\, uv}$ of $\Gamma_{\! H^u}$ and $\Gamma_{\! K^v}$ along $\Gamma_{\! H^u \cap K^v}$ is the wedge of $\Gamma_{\! H^u}$ and $\Gamma_{\! K^v}$.
In fact, by our choice of isolated point, the pushout $\T^{\, uv}$ will be equal to $\Gamma_{\! H^u \vee K^v}$, as the former admits no folds.

In such a case, the pushout of $\Gamma_{\! H^u}$ and $\Gamma_{\! K^v}$ along $\Gamma_{\! H^u \cap K^v} \sqcup \GHK$ has Euler characteristic small in absolute value (being a quotient of $\T$, it has no more than four edges), despite the fact that the graph $\Gamma_{\! H^u \vee K^v}$ has Euler characteristic very large in absolute value.

See Section \ref{walterineq} for what can be said about the general situation.

\section{Remarks.}

\subsection{A bipartite graph}

Estimating $-\chi(\T)$ may be done from a different point of view, suggested to us by W. Dicks---compare \cite{dicks}.

Given our subgroups $H$ and $K$, define a bipartite graph $\Delta$ with $2h-2$ black vertices $x_1, \ldots, x_{2h-2}$ and $2k-2$ white vertices $y_1, \ldots, y_{2k-2}$ where $x_i$ is joined to $y_j$ by an edge if and only if the $i,j$--entry of $M$ is $1$. 
It is easy to see that the number $c$ of components of $\Delta$ is equal to $\ell + p + q$, and that its edges are $2 \, \rank(H\cap K)-2$ in number.

One may estimate the number of edges of $\Delta$, and hence $\rank(H\cap K)$, by counting the maximum number of edges possible in a bipartite graph with $2h-2$ black vertices and $2k-2$ white vertices whose number of components is equal to $c$.

It may be that a direct study of $\Delta$ would produce the inequalities given here, but we have not investigated this.

\subsection{Walter Neumann inequalities}\label{walterineq}

Let $X$ be a set of representatives for the double coset space $H \backslash F \slash K$ and let $Y$ be the subset of $X$ consisting of those $g$ such that $H \cap K^g$ is nontrivial.
As mentioned at the end of Section \ref{particular}, other than in our treatment of Burns' theorem, we have not estimated the sum
\[
\sum_{g\in Y} \! \! \big( \rank(H\cap K^g) - 1\big) 
\]
using hypotheses on $\rank(H\vee K)$. 
However, we are free to replace $\rank(H\cap K) -1$ with this sum throughout provided we replace $\rank(H\vee K)$ with $\rank \langle H, K, Y \rangle$.

To see this, note that we may replace $\T$ with the pushout $\mathcal S$ of the diagram
 \[
 \xymatrix{\displaystyle{\bigsqcup}\ \Gamma_{H\cap K^g}  \ar[r]  \ar[d]_<{\overset{}{g \in Y}\ \ }
 &   \GH  \\
 \GK
    } 
\]
to obtain a diagram
\[
 \xymatrix{\displaystyle{\bigsqcup}\ \Gamma_{H\cap K^g}  \ar[r]  \ar[d]_<{\overset{}{g \in Y}\ \ } &   \GH   \ar[d] \ar[ddr] \\
  \GK \ar[r] \ar[drr] & \mathcal S \ar[dr] \\
  & & \Gamma_{\! \langle H, K, Y \rangle}
    } 
\]
where the map $\mathcal S \to  \Gamma_{\! \langle H, K, Y \rangle}$ factors into a series of folds.

\bibliographystyle{plain}
\bibliography{join}

\bigskip

\noindent Department of Mathematics, Brown University, Providence, RI 02912
\newline \noindent  \texttt{rkent@math.brown.edu}

\end{document}